\numberwithin{equation}{section}
\newtheorem{theorem}{Theorem}
\newtheorem{proposition}{Proposition}
\newtheorem{corollary}{Corollary}
\newtheorem{lemma}{Lemma}
\newtheorem{definition}{Definition}
\theoremstyle{definition}
\newtheorem*{remarkth}{Remark}
\newenvironment{remark}{\begin{remarkth}}{\hfill$\lozenge$\end{remarkth}}
\DeclareMathOperator{\ad}{ad}
\DeclareMathOperator{\Ad}{Ad}
\DeclareMathOperator{\vecspan}{span}
\DeclareMathOperator{\tr}{tr}
\begin{document}

\title[Optimal Control Problems with Symmetry Breaking Cost Functions]{Optimal Control Problems with Symmetry Breaking Cost Functions}

\author[A. M. Bloch]{Anthony M. Bloch}
   \address{A. M. Bloch: Department of Mathematics, University of Michigan, Ann Arbor, MI 48109-1043, USA}  \email{abloch@umich.edu}
\author[L. J. Colombo]{Leonardo J. Colombo} 
\address{L. J. Colombo: Department of Mathematics, University of Michigan, Ann Arbor, MI 48109-1043, USA}\email{ljcolomb@umich.edu} 
\author[R. Gupta]{Rohit Gupta}
   \address{R. Gupta: Department of Aerospace Engineering, University of Michigan, Ann Arbor, MI 48109-2140, USA}  \email{rohitgpt@umich.edu}
\author[T. Ohsawa]{Tomoki Ohsawa}
   \address{T. Ohsawa: Department of Mathematical Sciences, The University of Texas at Dallas, Richardson, TX 75080-3021, USA}  \email{tomoki@utdallas.edu}

\begin{abstract}
We investigate symmetry reduction of optimal control problems for left-invariant control systems on Lie groups, with partial symmetry breaking cost functions. Our approach emphasizes the role of variational principles and considers a discrete-time setting as well as the standard continuous-time formulation. Specifically, we recast the optimal control problem as a constrained variational problem with a partial symmetry breaking Lagrangian and obtain the Euler--Poincar{\'e} equations from a variational principle. By applying a Legendre transformation to it, we recover the Lie--Poisson equations obtained by A. D. Borum [\textit{Master's Thesis}, University of Illinois at Urbana-Champaign, 2015] in the same context. We also discretize the variational principle in time and obtain the discrete-time Lie--Poisson equations. We illustrate the theory with some practical examples including a motion planning problem in the presence of an obstacle.

\end{abstract}
\maketitle

\noindent \textbf{Key words}:
\textsl{Euler--Poincar{\'e} equations, Lie--Poisson equations, Optimal control, Symmetry reduction}

\vbox{\raggedleft AMS s.\,c.\,(2010): 70G45, 70H03, 70H05, 37J15, 49J15}


\section{Introduction}
Symmetry reduction of optimal control problems (OCPs) for left-invar\-iant control systems on Lie groups has been studied extensively over the past couple of  decades. Such symmetries are usually described as an invariance under an action of a Lie group and the system can be reduced to a lower-dimensional one or decoupled into subsystems by exploiting the symmetry (see, e.g., \cite{Agrachev2004}, \cite{Bloch1994}, \cite{Leon2004}, \cite{Grizzle1984}, \cite{Koon1997}, \cite{Krishnaprasad1993}, \cite{Ohsawa2013}, , \cite{Sachkov2009}, \cite{Van1987}). Symmetry reduction of OCPs is desirable from a computational point of view as well. Given that solving OCPs usually involves iterative/numerical methods such as the shooting method (as opposed to solving a single initial value problem), reducing the system to a lower-dimensional one results in a considerable reduction of the computational cost as well.

The goal of this work is to study symmetry reduction of OCPs for left-invariant control systems on Lie groups with partially broken symmetries, more specifically, cost functions that break some of the symmetries but not all. Symmetry breaking is common in several physical contexts, from classical mechanics to particle physics. The simplest example is the heavy top dynamics (the motion of a rigid body with a fixed point in a gravitational field), where due to the presence of gravity, we get a Lagrangian that is $\mathrm{SO}(2)$-invariant but not $\mathrm{SO}(3)$-invariant, contrary to what happens for the free rigid body. Reduction theory for Lagrangian/Hamiltonian systems on Lie groups with symmetry breaking was developed in \cite{Holm1998}, \cite{Marsden1984_2}, \cite{Marsden1984_1}. In the context of motion planning, the symmetry breaking appears naturally in the form of a barrier function as we shall see in Section \ref{subsec4.2}.

The results in this paper are the Lagrangian/variational counterpart of those in \cite{Borum2015}, \cite{Borum2014}; we also develop a discrete-time version of the results. From the Lagrangian point of view, we obtain the Euler--Poincar{\'e} equations from a variational principle and by using a Legendre transformation, we obtain the Lie--Poisson equations. By discretizing the variational principle in time, we obtain the discrete-time Lie--Poisson equations.

We also work out some practical examples to illustrate the theory, including an optimal control description of the heavy rigid top equations as well as a motion planning problem with obstacles; the latter provides an example of the symmetry breaking by a barrier function mentioned above.

\subsection{Outline}
The paper is organized as follows. In Section \ref{sec2}, we introduce some preliminaries about geometric mechanics on Lie groups. In Section \ref{sec3}, we study the Euler--Poincar{\'e} reduction of OCPs for left-invariant control systems on Lie groups, with partial symmetry breaking cost functions and we also obtain the Lie--Poisson equations by using a Legendre transformation. Furthermore, we consider the discrete-time case and obtain the discrete-time Lie--Poisson equations. In Section \ref{sec4} some practical examples are considered to illustrate the theory. Finally, in Section \ref{sec5}, we make some concluding remarks.

\section{Preliminaries}\label{sec2}
Let $\mathcal{Q}$ be an $n$-dimensional differentiable manifold with local coordinates $(q^{1},\hdots,q^{n})$, the configuration space of a mechanical system. Denote by $T\mathcal{Q}$ its tangent bundle with induced local coordinates $(q^{1},\hdots,q^{n},\dot{q}^{1},\hdots,\dot{q}^{n})$. Given a Lagrangian $L\colon T\mathcal{Q}\to\mathbb{R}$, the corresponding Euler--Lagrange equations are 
\begin{align}
\frac{d}{dt}\left(\frac{\partial L}{\partial\dot q^i}\right)-\frac{\partial L}{\partial q^i} = 0, \hspace{5pt} i = 1,\hdots,n,\nonumber
\end{align}
which determine a system of implicit second-order differential equations. 

An indispensable tool in the study of mechanical systems is symmetry reduction. Particularly, when the configuration space is a Lie group, one can reduce the Euler--Lagrange equations, which generally give a system of second-order equations, to a system of first-order ones on its Lie algebra. 

Let $G$ be a Lie group and $\mathfrak{g}$ be its Lie algebra. Let $L_{g}: G\to G$ be the left translation by an element $g\in G$, i.e., $L_{g}(h) = gh$, for any $h\in G$. Note that the map $L_g$ is a diffeomorphism on $G$ and is also a left action of $G$ on itself. The tangent map  of $L_g$ at $h\in G$ is denoted by $T_{h}L_{g}\colon T_{h}G\to T_{gh}G$. Similarly, the cotangent map of $L_g$ at $h\in G$ is denoted by $T_{h}^{*}L_{g}\colon T^{*}_{gh}G\to T^{*}_{h}G$. A vector field $X\in\mathfrak{X}(G)$ is called left-invariant, if $X(L_{g}(h)) = T_{h}L_{g}(X(h))$, for any $g$, $h\in G$. Let $\Phi\colon G\times\mathcal{Q}\to \mathcal{Q}$ be a left action of $G$ on $\mathcal{Q}$. A function $f\colon \mathcal{Q}\to\mathbb{R}$ is said to be $G$-invariant, if $f\circ\Phi_{g} = f$, for any $g\in G$.

If we assume that the Lagrangian $L\colon TG\to\mathbb{R}$ is $G$-invariant under the left action of $G$ on $TG$, then we can obtain a reduced Lagrangian $\ell\colon\mathfrak{g}\to\mathbb{R}$, where $\ell(\xi) = L(g^{-1}g,T_{g}L_{g^{-1}}(\dot{g}))$ $= L(e,\xi)$. We can now obtain the reduced Euler--Lagrange equations, commonly known as the Euler--Poincar{\'e} equations (see, e.g., \cite{Bloch2015}, \cite{Holm2009}, \cite{Marsden1999}), which are given by\footnote{For an infinite-dimensional Lie algebra, one generally uses the $\delta$ notation instead of the $\partial$ notation (see, e.g., \cite{Bloch1996}).}
\begin{align}
\frac{d}{dt}\frac{\partial\ell}{\partial\xi} = \ad^{*}_{\xi}\frac{\partial\ell}{\partial\xi}.\label{eq_ep_intro}
\end{align}
The Euler--Poincar{\'e} equations together with the reconstruction equation $\xi = T_{g}L_{g^{-1}}(\dot{g})$ are equivalent to the Euler--Lagrange equations on $G$. If we assume that the reduced Lagrangian $\ell$ is hyper-regular, then we can obtain the reduced Hamiltonian $h\colon \mathfrak{g}^{*}\to\mathbb{R}$ (by using a Legendre transformation) given by
\begin{align}
h(\mu) = \langle\mu,\xi\rangle-\ell(\xi),\nonumber
\end{align}
where $\mu = \frac{\partial\ell}{\partial\xi}\in\mathfrak{g}^{*}$. The Euler--Poincar{\'e} equations \eqref{eq_ep_intro} can now be written as the Lie--Poisson equations (see, e.g., \cite{Bloch2015}, \cite{Holm2009}, \cite{Marsden1999}), which are given by
\begin{align}
\dot{\mu} = \ad^{*}_{\frac{\partial h}{\partial\mu}}\mu.
\end{align}

\section{Optimal Control Problems on Lie Groups}\label{sec3}
We will first define a left-invariant control system on $G$, which is assumed to be $n$-dimensional.

\begin{definition} 
A left-invariant control system on $G$ is given by
\begin{align}
\dot{g} = T_{e}L_{g}(u),\nonumber
\end{align}
where $g(\cdot)\in C^{1}([0,T],G)$ and $u$ is a curve in the vector space $\mathfrak{g}$. More precisely, if $\mathfrak{g} = \vecspan\{e_{1},\hdots,e_{m},e_{m+1},\hdots,e_{n}\}$, then $u$ is given by
\begin{align}
u(t) = e_{0}+\sum_{i=1}^{m}u^{i}(t)e_{i},\nonumber
\end{align}
where $e_{0}\in\mathfrak{g}$ and the $m$-tuple of control inputs $[u^{1}\hdots u^{m}]^{T}$ take values in $\mathbb{R}^{m}$.
\end{definition}

\begin{remark}
If $m<n$, then the left-invariant control system is under-actuated otherwise it is fully-actuated.
\end{remark}

In what follows, we will fix $\mathfrak{k} = \vecspan\{e_{1},\hdots,e_{m}\}$, $\mathfrak{p} = \vecspan\{e_{m+1},\hdots,e_{n}\}$, with $e_{0}\in\mathfrak{p}$ and $E\colonequals G\times\mathfrak{k}$. Consider the following OCP
\begin{align}
&\min_{u(\cdot)}J\colonequals\min_{u(\cdot)}\int_0^T[C(g(t),u(t))+V(g(t))]dt\nonumber
\intertext{subject to\hfill\textbf{(P1)}}
&\dot{g}(t) = T_{e}L_{g(t)}(u(t)), \hspace{5pt} g(0) = g_{0}, \hspace{5pt} g(T) = g_{T},\nonumber
\end{align}
where the following assumptions hold:\\

\begin{enumerate}[(i)]
\item{$C: E\to\mathbb{R}$ is a $G$-invariant function (under a suitable left action of $G$ on $E$, which will be defined shortly) and is also sufficiently regular.\label{a1}}
\item{$V: G\to\mathbb{R}$ (potential function) is not a $G$-invariant function and is also sufficiently regular.}
\item{The potential function also depends on a parameter $\alpha_{0}$ that may be considered to be an element of the dual space $X^{*}$ of a vector space $X$. Hence, we define the extended potential function as $V_{\textnormal{ext}}: G\times X^{*}\to\mathbb{R}$, with $V_{\textnormal{ext}}(\cdot,\alpha_{0}) = V$.}
\item{There is a left representation $\rho$ of $G$ on $X$, i.e., $\rho_{\cdot}: G\to\mathrm{GL}(X)$ is a homomorphism. Hence, there is a left representation $\rho^{*}$ of $G$ on $X^{*}$, i.e., $\rho_{\cdot}^{*}: G\to\mathrm{GL}(X^{*})$, defined as the adjoint of $\rho$, i.e., for any $g \in G$, we define $\rho_{g^{-1}}^{*}\in\mathrm{GL}(X^{*})$ as the adjoint of $\rho_{g^{-1}}\in\mathrm{GL}(X)$, i.e., for any $x\in X$ and $\alpha \in X^{*}$
\begin{align}
\langle\rho^{*}_{g^{-1}}(\alpha),x\rangle = \langle\alpha,\rho_{g^{-1}}(x)\rangle.\nonumber
\end{align}
As a result, there is a left action of $G$ on $G \times X^{*}$ defined as
\begin{align}
\Phi: G \times (G \times X^{*})&\longrightarrow G\times X^{*},\nonumber\\
(g,(h,\alpha))&\longmapsto(L_{g}(h),\rho_{g^{-1}}^{*}(\alpha)).\label{eq_phi}
\end{align}}
\item{The extended potential function is $G$-invariant under \eqref{eq_phi}, i.e., $V_{\textnormal{ext}}\circ\Phi_{g} = V_{\textnormal{ext}}$, for any $g \in G$, or more concretely $V_{\textnormal{ext}}(L_{g}(h),\rho^{*}_{g^{-1}}(\alpha)) = V_{\textnormal{ext}}(h,\alpha)$, for any $h\in G$ and $\alpha\in X^{*}$.\label{a5}}
\item{The potential function is invariant under the left action of the isotropy group
\begin{align}
G_{\alpha_{0}}\colonequals\{g\in G\mid\rho_{g}^{*}(\alpha_{0}) = \alpha_{0}\}.\nonumber
\end{align}\label{a6}}
\end{enumerate}

\begin{remark}
Note that $E$ is a trivial vector bundle over $G$ and define the left action of $G$ on $E$ as follows
\begin{align}
\Psi: G\times E&\longrightarrow E,\nonumber\\
(g,(h,u))&\longmapsto(L_{g}(h),u).\label{eq_psi}
\end{align}
Throughout the paper, we assume that $C: E\to\mathbb{R}$ is $G$-invariant under \eqref{eq_psi}, i.e., $C\circ\Psi_{g} = C$, for any $g\in G$.
\end{remark}

\subsection{Euler--Poincar{\'e} Reduction}
We can solve $\textbf{(P1)}$ as a constrained variational problem using the method of Lagrange multipliers (see, e.g, \cite{Bloch2015}, \cite{Koon1997}). Define the augmented Lagrangian $L_{a}: E\times\mathfrak{p}^{*}\times X^{*}\to\mathbb{R}$ as follows
\begin{align}
L_{a}(g,u,\lambda,\alpha) = C(g,u-e_{0})+V_{\textnormal{ext}}(g,\alpha)+\langle\lambda,u-e_{0}\rangle,\nonumber
\end{align}
where $\lambda(\cdot)\in C^{1}([0,T],\mathfrak{p}^{*})$. Define the left action of $G$ on $E\times\mathfrak{p}^{*}\times X^{*}$ as follows
\begin{align}
\Upsilon: G\times (E\times\mathfrak{p}^{*}\times X^{*})&\longrightarrow E\times\mathfrak{p}^{*}\times X^{*},\nonumber\\
(g,(h,u,\lambda,\alpha))&\longmapsto(L_{g}(h),u,\lambda,\rho_{g^{-1}}^{*}(\alpha)).\label{eq_upsilon}
\end{align}
Under the assumption \eqref{a5}, it follows that $L_{a}: E\times\mathfrak{p}^{*}\times X^{*}\to\mathbb{R}$ is $G$-invariant under \eqref{eq_upsilon}, i.e., $L_{a}\circ\Upsilon_{g} = L_{a}$, for any $g\in G$. In particular, under the assumption \eqref{a6}, it follows that the augmented Lagrangian $L_{a}(\cdot,\cdot,\cdot,\alpha_{0})\equalscolon L_{a,\alpha_{0}}: E\times\mathfrak{p}^{*}\to\mathbb{R}$ is $G_{\alpha_{0}}$-invariant under \eqref{eq_upsilon}, i.e., $L_{a,\alpha_{0}}\circ\Upsilon_{g} = L_{a,\alpha_{0}}$, for any $g\in G_{\alpha_{0}}$. We can now obtain the reduced augmented Lagrangian $\ell_{a}: \mathfrak{k}\times\mathfrak{p}^{*}\times X^{*}\to\mathbb{R}$, which is given by
\begin{align}
\ell_{a}(u,\lambda,\alpha) &\colonequals L_{a}(e,u,\lambda,\alpha)\nonumber\\
&= C(u-e_{0})+V_{\textnormal{ext}}(\alpha)+\langle\lambda,u-e_{0}\rangle,\nonumber
\end{align}
where $\alpha = \rho^{*}_{g}(\alpha_{0})$ and with a slight abuse of notation, we write $C(e,u-e_{0}) = C(u-e_{0})$ and $V_{\textnormal{ext}}(\alpha) = V_{\textnormal{ext}}(e,\alpha)$. A normal extremal for $\textbf{(P1)}$ satisfies the following Euler--Poincar{\'e} equations (see Theorem \ref{thm4} below for a proof)
\begin{align}
\frac{d}{dt}\left(\frac{\partial C}{\partial u}+\lambda\right) &= \ad^{*}_{u}\left(\frac{\partial C}{\partial u}+\lambda\right)+\mathbf{J}_{X}\left(\frac{\partial V_{\textnormal{ext}}}{\partial\alpha},\alpha\right),\label{eq_ep}\\
\dot{\alpha} &= \rho_{u}^{\prime*}(\alpha), \hspace{5pt} \alpha(0) = \rho^{*}_{g_{0}}(\alpha_{0}),\label{eq_diff_ep}
\end{align}
where $\mathbf{J}_{X}: T^{*}X\cong X\times X^{*}\to\mathfrak{g}^{*}$ is the momentum map corresponding to the left action of $G$ on $X$ defined using the left representation $\rho$ of $G$ on $X$, i.e., for any $x$, $\xi\in X$ and $\alpha\in X^{*}$
\begin{align}
\langle\mathbf{J}_{X}(x,\alpha),\xi\rangle = \langle\alpha,\xi_{X}(x)\rangle,\label{eq_mp}
\end{align}
with $\xi_{X}$ being the infinitesimal generator of the left action of $G$ on $X$ and $\rho_{\cdot}^{\prime*}: \mathfrak{g}\to\mathfrak{gl}(X^{*})$ is defined as the adjoint of $\rho^{\prime}$, which is the representation induced by the left representation $\rho$ of $G$ on $X$. Note that the solution to \eqref{eq_diff_ep} is given by $\alpha(\cdot) = \rho^{*}_{g(\cdot)}(\alpha_{0})$. Also, note that using the notation of \cite{Holm1998}, we have $\mathbf{J}_{X}(x,\alpha) = x\diamond\alpha$. For more details, see \cite{Cendra2001}, \cite{Holm1998}, \cite{Holm2009}. To summarize, we have the following theorem.

\begin{theorem}\label{thm4}
A normal extremal for $\textnormal{\textbf{(P1)}}$, under the assumptions \eqref{a1}--\eqref{a6}, satisfies the following Euler--Poincar{\'e} equations
\begin{align}
\frac{d}{dt}\left(\frac{\partial C}{\partial u}+\lambda\right) &= \ad^{*}_{u}\left(\frac{\partial C}{\partial u}+\lambda\right)+\mathbf{J}_{X}\left(\frac{\partial V_{\textnormal{ext}}}{\partial\alpha},\alpha\right),\label{eq_ep_thm4}\\
\dot{\alpha} &= \rho_{u}^{\prime*}(\alpha), \hspace{5pt} \alpha(0) = \rho^{*}_{g_{0}}(\alpha_{0}).\label{eq_diff_ep_thm4}
\end{align}
\end{theorem}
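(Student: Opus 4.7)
The plan is to apply Hamilton's variational principle to the reduced augmented Lagrangian $\ell_a$ and to follow the Euler--Poincar\'e reduction with an advected parameter in the style of Holm--Marsden--Ratiu. By assumptions \eqref{a5} and \eqref{a6}, the augmented Lagrangian $L_a$ is $G$-invariant under $\Upsilon$ once we treat $\alpha$ as an advected parameter satisfying $\alpha(t) = \rho^{*}_{g(t)}(\alpha_{0})$, so the unreduced variational problem $\delta \int_0^T L_a(g, u, \lambda, \alpha)\,dt = 0$ on $G$ descends to the reduced one $\delta \int_0^T \ell_a(u,\lambda,\alpha)\,dt = 0$ on $\mathfrak{k}\times\mathfrak{p}^{*}\times X^{*}$, with variations of $u$ and $\alpha$ induced by variations of $g$ vanishing at the endpoints, while $\delta\lambda\in\mathfrak{p}^{*}$ is free.

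Next I would identify the constrained variations. Parametrize a family $g_{\epsilon}(t)$ with $\partial_{\epsilon}|_{\epsilon=0}g_{\epsilon}(t) = T_{e}L_{g(t)}(\eta(t))$ for some $\eta\in C^{1}([0,T],\mathfrak{g})$ with $\eta(0)=\eta(T)=0$. The standard computation for left-invariant systems gives $\delta u = \dot{\eta} + \ad_{u}\eta$. For the advected quantity, differentiating $\rho^{*}_{g_{\epsilon}(t)}(\alpha_0)$ in $\epsilon$ using the homomorphism property of $\rho$ yields $\delta\alpha = \rho^{\prime *}_{\eta}(\alpha)$. The same calculation with $t$ in place of $\epsilon$, together with the reconstruction $\dot g = T_{e}L_{g}(u)$, gives $\dot{\alpha} = \rho^{\prime *}_{u}(\alpha)$ with $\alpha(0)=\rho^{*}_{g_{0}}(\alpha_{0})$, which is exactly \eqref{eq_diff_ep_thm4}.

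It then remains to substitute these into the first variation. Using
\[
\frac{\partial\ell_a}{\partial u} = \frac{\partial C}{\partial u} + \lambda,\qquad \frac{\partial\ell_a}{\partial\lambda} = u - e_{0},\qquad \frac{\partial\ell_a}{\partial\alpha} = \frac{\partial V_{\textnormal{ext}}}{\partial\alpha},
\]
I would integrate the $\dot\eta$ term by parts (the boundary terms vanish) and convert the $\ad_{u}\eta$ pairing to $\ad^{*}_{u}$. The $\delta\alpha$ contribution is rewritten via the momentum map identity \eqref{eq_mp}: since $\rho^{\prime}_{\eta}$ is the infinitesimal generator of $\eta$ on $X$, we have
\[
\left\langle \rho^{\prime *}_{\eta}(\alpha),\frac{\partial V_{\textnormal{ext}}}{\partial\alpha}\right\rangle = \left\langle \alpha,\rho^{\prime}_{\eta}\!\left(\frac{\partial V_{\textnormal{ext}}}{\partial\alpha}\right)\right\rangle = \left\langle \mathbf{J}_{X}\!\left(\frac{\partial V_{\textnormal{ext}}}{\partial\alpha},\alpha\right),\eta\right\rangle.
\]
Collecting all coefficients of $\eta$ and invoking the fundamental lemma of the calculus of variations yields \eqref{eq_ep_thm4}, while the free variation $\delta\lambda$ recovers the admissibility constraint $u-e_{0}\in\mathfrak{k}$.

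The main conceptual obstacle is bookkeeping: getting the right sign and order in $\delta\alpha$ from the (anti)homomorphism $\rho^{*}_{gh} = \rho^{*}_{h}\rho^{*}_{g}$, and making sure that the constrained variation on $\mathfrak{g}$ induced from an endpoint-vanishing $\eta$ is general enough to extract the full equation \eqref{eq_ep_thm4} pointwise in $t$. Everything else is routine integration by parts and duality pairings.
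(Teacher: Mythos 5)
Your proposal is correct and follows essentially the same route as the paper: reduce the unreduced variational principle for $L_{a,\alpha_0}$ to the constrained one for $\ell_a$ using the invariance under \eqref{eq_upsilon}, take variations $\delta u = \dot{\eta}+\ad_{u}\eta$ and $\delta\alpha = \rho^{\prime*}_{\eta}(\alpha)$ induced by endpoint-vanishing variations of $g$, integrate by parts, and convert the $\delta\alpha$ term via the momentum map identity \eqref{eq_mp}. The only cosmetic difference is that you make the role of the free variation $\delta\lambda$ explicit, which the paper leaves implicit.
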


\begin{proof}
The proof follows arguments similar to the ones given in \cite{Holm1998}, \cite{Koon1997}. Consider the following variational principles:\\

\begin{enumerate}[(a)]
\item{The variational principle 
\begin{align}
\delta\int_{0}^{T}L_{a,\alpha_{0}}(g(t),u(t),\lambda(t))dt = 0,\nonumber
\end{align}
holds for all variations of $g$ (vanishing at the endpoints) and $u$.\label{v1_thm4}}
\item{The constrained variational principle
\begin{align}
\delta\int_{0}^{T}\ell_{a}(u(t),\lambda(t),\alpha(t))dt = 0,\nonumber
\end{align}
holds using variations of $u$ and $\alpha$ of the form
\begin{align}
\delta{u} &= \dot{\eta}+\ad_{u}\eta,\nonumber\\
\delta{\alpha} &= \rho_{\eta}^{\prime*}(\alpha),\nonumber
\end{align}
where $\eta(\cdot)\in C^{1}([0,T],\mathfrak{g})$ vanishes at the endpoints.\label{v2_thm4}}
\end{enumerate}
\vspace{12pt}
We will first show that the variational principle \eqref{v1_thm4} implies the constrained variational principle \eqref{v2_thm4}. We begin by noting that $L_{a}: E\times\mathfrak{p}^{*}\times X^{*}\to\mathbb{R}$ is $G$-invariant under \eqref{eq_upsilon}, i.e., $L_{a}\circ\Upsilon_{g} = L_{a}$, for any $g\in G$ and $\alpha = \rho^{*}_{g}(\alpha_{0})$, so the integrand in the variational principle \eqref{v1_thm4} is equal to the integrand in the constrained variational principle \eqref{v2_thm4}. However, all variations of $g$ vanishing at the endpoints induce and are induced by variations of $u$ of the form $\delta{u} = \dot{\eta}+\ad_{u}\eta$, with $\eta(0) = \eta(T) = 0$. The relation between $\delta{g}$ and $\eta$ is given by $\eta = T_{g}L_{g^{-1}}(\delta{g})$ (see Proposition 5.1 in \cite{Bloch1996}, which is Lemma 3.2 in \cite{Holm1998}). So, if the variational principle \eqref{v1_thm4} holds and if we define $\eta = T_{g}L_{g^{-1}}(\delta{g})$ and $\delta{u} = T_{g}L_{g^{-1}}(\dot{g})$, then by Proposition 5.1 in \cite{Bloch1996}, we have $\delta{u} = \dot{\eta}+\ad_{u}\eta$. In addition, we have $\delta{\alpha} = \rho_{\eta}^{\prime*}(\alpha)$. Hence, the variational principle \eqref{v1_thm4} implies the constrained variational principle \eqref{v2_thm4}.

A normal extremal for $\textbf{(P1)}$ satisfies the variational principle \eqref{v1_thm4} and hence, the constrained variational principle \eqref{v2_thm4}, by the above discussion. So, we have
\begin{align}
0 &= \delta\int_{0}^{T}\ell_{a}(u(t),\lambda(t),\alpha(t))dt\nonumber\\
&= \int_{0}^{T}\left[\left\langle\frac{\partial C}{\partial u},\delta{u}\right\rangle+\langle\lambda,\delta{u}\rangle+\left\langle\delta{\alpha},\frac{\partial V_{\textnormal{ext}}}{\partial\alpha}\right\rangle\right]dt\nonumber\\
&= \int_{0}^{T}\left[\left\langle\frac{\partial C}{\partial u}+\lambda,\delta{u}\right\rangle+\left\langle\delta{\alpha},\frac{\partial V_{\textnormal{ext}}}{\partial\alpha}\right\rangle\right]dt\nonumber\\
&= \int_{0}^{T}\left[\left\langle\frac{\partial C}{\partial u}+\lambda,\dot{\eta}+\ad_{u}\eta\right\rangle+\left\langle\delta{\alpha},\frac{\partial V_{\textnormal{ext}}}{\partial\alpha}\right\rangle\right]dt\nonumber\\
&= \int_{0}^{T}\left[\left\langle-\frac{d}{dt}\left(\frac{\partial C}{\partial u}+\lambda\right)+\ad_{u}^{*}\left(\frac{\partial C}{\partial u}+\lambda\right),\eta\right\rangle+\left\langle\rho_{\eta}^{\prime*}(\alpha),\frac{\partial V_{\textnormal{ext}}}{\partial\alpha}\right\rangle\right]dt\nonumber\\
&= \int_{0}^{T}\left[\left\langle-\frac{d}{dt}\left(\frac{\partial C}{\partial u}+\lambda\right)+\ad_{u}^{*}\left(\frac{\partial C}{\partial u}+\lambda\right)+\mathbf{J}_{X}\left(\frac{\partial V_{\textnormal{ext}}}{\partial\alpha},\alpha\right),\eta\right\rangle\right]dt,\nonumber
\end{align}
where we have used integration by parts along with the fact that $\eta(0) = \eta(T) = 0$ and so, we have
\begin{align}
\frac{d}{dt}\left(\frac{\partial C}{\partial u}+\lambda\right) = \ad^{*}_{u}\left(\frac{\partial C}{\partial u}+\lambda\right)+\mathbf{J}_{X}\left(\frac{\partial V_{\textnormal{ext}}}{\partial\alpha},\alpha\right).\nonumber
\end{align}
Finally, by taking the time derivative of $\alpha$, we have
\begin{align}
\dot{\alpha} = \rho_{u}^{\prime*}(\alpha), \hspace{5pt} \alpha(0) = \rho^{*}_{g_{0}}(\alpha_{0}).\nonumber
\end{align}
\end{proof}

There are two special cases of Theorem \ref{thm4} that are of particular interest and we state them as corollaries.

\begin{corollary}\label{cor5}
Let $X = \mathfrak{g}$ and $\rho$ be the adjoint representation of $G$ on $X$, i.e., $\rho_{g} = \Ad_{g}$, for any $g\in G$. Then, the Euler--Poincar{\'e} equations \eqref{eq_ep_thm4}--\eqref{eq_diff_ep_thm4} give the following equations
\begin{align}
\frac{d}{dt}\left(\frac{\partial C}{\partial u}+\lambda\right) &= \ad^{*}_{u}\left(\frac{\partial C}{\partial u}+\lambda\right)-\ad_{\frac{\partial V_{\textnormal{ext}}}{\partial\alpha}}^{*}\alpha,\nonumber\\
\dot{\alpha} &= \ad_{u}^{*}\alpha, \hspace{5pt} \alpha(0) = \Ad_{g_{0}}^{*}\alpha_{0}.\nonumber
\end{align}
\end{corollary}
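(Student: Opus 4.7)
The plan is to specialize Theorem \ref{thm4} to $X = \mathfrak{g}$, $\rho = \Ad$; essentially all that is needed is to compute the derived representation $\rho'^*$ on $\mathfrak{g}^*$ and to evaluate the momentum map $\mathbf{J}_X$ explicitly. There is no new analytic content beyond Theorem \ref{thm4}: the work is purely algebraic bookkeeping.

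For equation \eqref{eq_diff_ep_thm4}, I note that the Lie algebra representation induced by $\rho = \Ad$ is the adjoint representation $\rho'_\xi = \ad_\xi$ of $\mathfrak{g}$ on itself, whose dual is $\rho'^*_\xi = \ad^*_\xi$. Plugging in $\xi = u$ yields $\dot\alpha = \ad^*_u \alpha$, and the initial condition $\alpha(0) = \rho^*_{g_0}(\alpha_0)$ reduces to $\Ad^*_{g_0}\alpha_0$ directly from the paper's definition $\langle \rho^*_g(\alpha), x\rangle = \langle \alpha, \Ad_g x\rangle$.

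The only genuine calculation is the momentum map $\mathbf{J}_X\colon \mathfrak{g} \times \mathfrak{g}^* \to \mathfrak{g}^*$. Since the infinitesimal generator of the adjoint action of $G$ on $X = \mathfrak{g}$ is $\xi_X(x) = [\xi, x] = \ad_\xi x$, the defining relation \eqref{eq_mp} gives
\begin{align*}
\langle \mathbf{J}_X(x, \alpha), \xi\rangle = \langle \alpha, [\xi, x]\rangle = -\langle \alpha, \ad_x \xi\rangle = -\langle \ad^*_x \alpha, \xi\rangle
\end{align*}
for every $\xi \in \mathfrak{g}$, so $\mathbf{J}_X(x, \alpha) = -\ad^*_x \alpha$. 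Substituting $x = \partial V_{\textnormal{ext}}/\partial\alpha$ turns the source term in \eqref{eq_ep_thm4} into $-\ad^*_{\partial V_{\textnormal{ext}}/\partial\alpha}\alpha$, and collecting these pieces produces the equations in the corollary. The one place to be mildly careful is the sign flip coming from the antisymmetry of the Lie bracket in the momentum map computation; beyond that, I expect no real obstacle.
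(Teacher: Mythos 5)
Your proposal is correct and follows essentially the same route as the paper: identify $\rho^{*}_{g} = \Ad_{g}^{*}$ and $\rho'_{\xi} = \ad_{\xi}$ (hence $\rho_{u}^{\prime *} = \ad_{u}^{*}$), then compute $\langle\mathbf{J}_{X}(x,\alpha),\xi\rangle = \langle\alpha,\ad_{\xi}x\rangle = -\langle\ad_{x}^{*}\alpha,\xi\rangle$ to get $\mathbf{J}_{X}(x,\alpha) = -\ad_{x}^{*}\alpha$, exactly as in the paper's proof. The sign from the antisymmetry of the bracket, which you flag, is indeed the only delicate point, and you handle it correctly.
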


\begin{proof}
We first begin by noting that $\alpha\in X^{*} = \mathfrak{g}^{*}$ and $\rho^{*}$ is the coadjoint representation of $G$ on $X^{*}$, i.e., $\rho^{*}_{g} = \Ad_{g}^{*}$, for any $g\in G$. We also have $\rho_{\xi}^{\prime} = \ad_{\xi}$, for any $\xi\in\mathfrak{g}$ and it follows that $\xi_{X}(x) = \ad_{\xi}x$, for any $x\in X$. From \eqref{eq_mp}, we have 
\begin{align}
\langle\mathbf{J}_{X}(x,\alpha),\xi\rangle &= \langle\alpha,\ad_{\xi}x\rangle\nonumber\\
&= \langle\alpha,-\ad_{x}\xi\rangle\nonumber\\
&= \langle-\ad_{x}^{*}\alpha,\xi\rangle,\nonumber
\end{align}
which gives $\mathbf{J}_{X}(x,\alpha) = -\ad_{x}^{*}\alpha$.
\end{proof}

\begin{corollary}\label{cor6}
Let $X = \mathfrak{g}^{*}$ and $\rho$ be the coadjoint representation of $G$ on $X$, i.e., $\rho_{g} = \Ad_{g^{-1}}^{*}$, for any $g\in G$. Then, the Euler--Poincar{\'e} equations \eqref{eq_ep_thm4}--\eqref{eq_diff_ep_thm4} give the following equations
\begin{align}
\frac{d}{dt}\left(\frac{\partial C}{\partial u}+\lambda\right) &= \ad^{*}_{u}\left(\frac{\partial C}{\partial u}+\lambda\right)+\ad_{\alpha}^{*}\frac{\partial V_{\textnormal{ext}}}{\partial\alpha},\nonumber\\
\dot{\alpha} &= -\ad_{u}\alpha, \hspace{5pt} \alpha(0) = \Ad_{g_{0}^{-1}}\alpha_{0}.\nonumber
\end{align}
\end{corollary}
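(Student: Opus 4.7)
The plan is to specialize Theorem \ref{thm4} to the case $X = \mathfrak{g}^{*}$ with $\rho_{g} = \Ad_{g^{-1}}^{*}$, proceeding in close parallel with the proof of Corollary \ref{cor5}. Since $G$ is finite-dimensional, I will identify $X^{*} = \mathfrak{g}^{**}$ with $\mathfrak{g}$ so that $\alpha \in \mathfrak{g}$ and $\partial V_{\textnormal{ext}}/\partial \alpha \in \mathfrak{g}^{*}$, which is compatible with the pairings in the claimed equations. The whole argument then reduces to identifying three objects: the dual representation $\rho^{*}$ on $\mathfrak{g}$, its infinitesimal form $\rho^{\prime*}$, and the momentum map $\mathbf{J}_{X}$.

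First, I would unwind the defining identity $\langle \rho_{g^{-1}}^{*}(\alpha), x\rangle = \langle \alpha, \rho_{g^{-1}}(x)\rangle = \langle \alpha, \Ad_{g}^{*}(x)\rangle$ for $x \in \mathfrak{g}^{*}$ and $\alpha \in \mathfrak{g}$, and move $\Ad_{g}^{*}$ across the pairing via $\langle \Ad_{g}^{*}(x), \alpha\rangle = \langle x, \Ad_{g}(\alpha)\rangle$. This yields $\rho_{g}^{*} = \Ad_{g^{-1}}$ on $\mathfrak{g}$, and in particular $\alpha(0) = \rho_{g_{0}}^{*}(\alpha_{0}) = \Ad_{g_{0}^{-1}}\alpha_{0}$. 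Next, differentiating $\rho_{g} = \Ad_{g^{-1}}^{*}$ at the identity gives $\rho'_{\xi} = -\ad_{\xi}^{*}$ on $\mathfrak{g}^{*}$, and dualizing yields $\rho^{\prime*}_{\xi} = -\ad_{\xi}$ on $\mathfrak{g}$; evaluating at $u$ turns \eqref{eq_diff_ep_thm4} into $\dot\alpha = -\ad_{u}\alpha$, as required.

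The remaining step is the momentum map. From \eqref{eq_mp}, $\langle \mathbf{J}_{X}(x,\alpha), \xi\rangle = \langle \alpha, \xi_{X}(x)\rangle = -\langle \alpha, \ad_{\xi}^{*} x\rangle$; swapping the pairing and applying the antisymmetry $[\xi,\alpha] = -[\alpha,\xi]$ gives $\mathbf{J}_{X}(x,\alpha) = \ad_{\alpha}^{*} x$. Substituting $x = \partial V_{\textnormal{ext}}/\partial \alpha$ into the right-hand side of \eqref{eq_ep_thm4} produces precisely the term $+\ad_{\alpha}^{*}(\partial V_{\textnormal{ext}}/\partial \alpha)$ appearing in the corollary.

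The arguments are essentially bookkeeping, but the main pitfall is sign tracking: the derivative of $g \mapsto \Ad_{g^{-1}}$ at the identity contributes one minus sign, and the Jacobi antisymmetry contributes another, so the two need to combine correctly to give $-\ad_{u}\alpha$ in the auxiliary equation and a $+\ad_{\alpha}^{*}$ (rather than a cancelling $-$) in the source term of the Euler--Poincar\'e equation. I would therefore verify at each substitution which space the arguments live in, and that exactly one sign flip accompanies each step.
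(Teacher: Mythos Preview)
Your proposal is correct and follows essentially the same route as the paper: identify $X^{*}=\mathfrak{g}^{**}\cong\mathfrak{g}$ so that $\rho_{g}^{*}=\Ad_{g^{-1}}$, differentiate to obtain $\rho'_{\xi}=-\ad_{\xi}^{*}$ (hence $\dot\alpha=-\ad_{u}\alpha$), and compute $\mathbf{J}_{X}(x,\alpha)=\ad_{\alpha}^{*}x$ from \eqref{eq_mp} via the chain $\langle\alpha,-\ad_{\xi}^{*}x\rangle=\langle-\ad_{\xi}\alpha,x\rangle=\langle x,\ad_{\alpha}\xi\rangle=\langle\ad_{\alpha}^{*}x,\xi\rangle$. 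Your extra care about sign tracking is well placed but does not depart from the paper's argument.
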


\begin{proof}
We first begin by noting that $\alpha\in X^{*} = \mathfrak{g}^{**}\cong\mathfrak{g}$ and $\rho^{*}$ is the adjoint representation of $G$ on $X^{*}$, i.e., $\rho^{*}_{g} = \Ad_{g^{-1}}$, for any $g\in G$. We also have $\rho_{\xi}^{\prime} = -\ad_{\xi}^{*}$, for any $\xi\in\mathfrak{g}$ and it follows that $\xi_{X}(x) = -\ad_{\xi}^{*}x$, for any $x\in X$. From \eqref{eq_mp}, we have
\begin{align}
\langle\mathbf{J}_{X}(x,\alpha),\xi\rangle &= \langle\alpha,-\ad^{*}_{\xi}x\rangle\nonumber\\
&= \langle-\ad_{\xi}\alpha,x\rangle\nonumber\\
&= \langle x,\ad_{\alpha}\xi\rangle\nonumber\\
&= \langle\ad^{*}_{\alpha}x,\xi\rangle,\nonumber
\end{align}
which gives $\mathbf{J}_{X}(x,\alpha) = \ad^{*}_{\alpha}x$.
\end{proof}

The Euler--Poincar\'e equation \eqref{eq_ep_thm4} is not particularly feasible for describing the time evolution of $u$ and $\lambda$  because the equations for them are combined into a single equation. However, assuming an additional structure on the Lie algebra $\mathfrak{g}$, we may decouple the equations for $u$ and $\lambda$ (see \cite{Bloch1994} for a similar approach applied to the standard Euler--Poincar{\'e} equation):

\begin{proposition}\label{prop7}
Assume that $\mathfrak{g} = \mathfrak{k}\oplus\mathfrak{p}$ such that 
\begin{align}
[\mathfrak{k},\mathfrak{k}]\subseteq\mathfrak{p}, \hspace{5pt} [\mathfrak{p},\mathfrak{k}]\subseteq\mathfrak{k}, \hspace{5pt} [\mathfrak{p},\mathfrak{p}]\subseteq\mathfrak{p},\nonumber
\end{align}
then the time evolution of $u$ and $\lambda$ in \eqref{eq_ep_thm4} are given by the following equations 
\begin{align}
\frac{d}{dt}\frac{\partial C}{\partial u} &= \ad_{e_0}^{*}\frac{\partial C}{\partial u}+\ad_{u-e_{0}}^{*}\lambda+\mathbf{J}_{X}\left(\frac{\partial V_{\textnormal{ext}}}{\partial\alpha},\alpha\right)\bigg|_{\mathfrak{k}},\nonumber\\
\frac{d\lambda}{dt} &= \ad_{e_0}^{*}\lambda+\ad_{u-e_{0}}^{*}\frac{\partial C}{\partial u}+\mathbf{J}_{X}\left(\frac{\partial V_{\textnormal{ext}}}{\partial\alpha},\alpha\right)\bigg|_{\mathfrak{p}},\nonumber
\end{align}
where $u-e_{0}\in\mathfrak{k}$.
\end{proposition}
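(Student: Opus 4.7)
The plan is to decompose both sides of the Euler--Poincar{\'e} equation \eqref{eq_ep_thm4} according to the splitting $\mathfrak{g}^{*} = \mathfrak{k}^{*} \oplus \mathfrak{p}^{*}$ dual to $\mathfrak{g} = \mathfrak{k} \oplus \mathfrak{p}$, where $\mathfrak{k}^{*}$ is identified with the annihilator of $\mathfrak{p}$ in $\mathfrak{g}^{*}$ and $\mathfrak{p}^{*}$ with the annihilator of $\mathfrak{k}$. By construction, $\partial C/\partial u \in \mathfrak{k}^{*}$ (since $C$ depends only on $u - e_{0} \in \mathfrak{k}$) and $\lambda \in \mathfrak{p}^{*}$, so the left-hand side of \eqref{eq_ep_thm4} splits automatically; the task is to show that after projecting, the right-hand side organizes itself into the claimed expressions.

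The key step is a short computation showing how $\ad^{*}_{\xi}$ interacts with the splitting, using the three bracket inclusions. For $\xi \in \mathfrak{p}$, the inclusions $[\mathfrak{p}, \mathfrak{p}] \subseteq \mathfrak{p}$ and $[\mathfrak{p}, \mathfrak{k}] \subseteq \mathfrak{k}$ imply that $\ad^{*}_{\xi}$ preserves each summand, sending $\mathfrak{k}^{*} \to \mathfrak{k}^{*}$ and $\mathfrak{p}^{*} \to \mathfrak{p}^{*}$. For $\xi \in \mathfrak{k}$, the inclusions $[\mathfrak{k}, \mathfrak{k}] \subseteq \mathfrak{p}$ and $[\mathfrak{k}, \mathfrak{p}] \subseteq \mathfrak{k}$ imply the opposite behavior: $\ad^{*}_{\xi}$ swaps the two summands. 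Each of the four statements is a one-line check; for instance, if $\xi \in \mathfrak{k}$ and $\mu \in \mathfrak{k}^{*}$, then for any $\eta \in \mathfrak{k}$ one has $\langle \ad^{*}_{\xi}\mu, \eta\rangle = \langle \mu, [\xi, \eta]\rangle = 0$ because $[\xi, \eta] \in \mathfrak{p}$ and $\mu$ annihilates $\mathfrak{p}$, so $\ad^{*}_{\xi}\mu$ annihilates $\mathfrak{k}$ and hence lies in $\mathfrak{p}^{*}$.

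With this bookkeeping in hand, I would write $u = e_{0} + (u - e_{0})$ with $e_{0} \in \mathfrak{p}$ and $u - e_{0} \in \mathfrak{k}$, expand $\ad^{*}_{u} = \ad^{*}_{e_{0}} + \ad^{*}_{u - e_{0}}$, and distribute across $\partial C/\partial u + \lambda$. The $\mathfrak{k}^{*}$-valued terms on the right are $\ad^{*}_{e_{0}}(\partial C/\partial u)$ and $\ad^{*}_{u - e_{0}}\lambda$, while the $\mathfrak{p}^{*}$-valued terms are $\ad^{*}_{e_{0}}\lambda$ and $\ad^{*}_{u - e_{0}}(\partial C/\partial u)$. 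Splitting the momentum-map contribution $\mathbf{J}_{X}(\partial V_{\textnormal{ext}}/\partial\alpha, \alpha)$ into its $\mathfrak{k}^{*}$- and $\mathfrak{p}^{*}$-components (which is precisely the meaning of $|_{\mathfrak{k}}$ and $|_{\mathfrak{p}}$) and matching the projections of the left-hand side gives the two stated equations.

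The main obstacle is purely notational: one must keep track of the identifications between $\mathfrak{k}^{*}$, $\mathfrak{p}^{*}$ and their images as annihilators in $\mathfrak{g}^{*}$, and confirm that $\partial C/\partial u$ genuinely lives in the $\mathfrak{k}^{*}$ summand. Once those identifications are fixed, the decomposition is an immediate consequence of the symmetric-pair bracket conditions and no further analytic work is required.
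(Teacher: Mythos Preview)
Your proposal is correct and follows essentially the same approach as the paper's proof: both split $\mathfrak{g}^{*}=\mathfrak{k}^{*}\oplus\mathfrak{p}^{*}$, verify the four inclusions $\ad_{\mathfrak{p}}^{*}\mathfrak{k}^{*}\subseteq\mathfrak{k}^{*}$, $\ad_{\mathfrak{k}}^{*}\mathfrak{k}^{*}\subseteq\mathfrak{p}^{*}$, $\ad_{\mathfrak{p}}^{*}\mathfrak{p}^{*}\subseteq\mathfrak{p}^{*}$, $\ad_{\mathfrak{k}}^{*}\mathfrak{p}^{*}\subseteq\mathfrak{k}^{*}$, decompose $u=e_{0}+(u-e_{0})$, and read off the two components. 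Your write-up is in fact slightly more explicit than the paper's, since you spell out the annihilator identifications and give a sample pairing computation, whereas the paper simply states the four $\ad^{*}$ inclusions as ``easy to verify''.
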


\begin{proof}
It is easy to verify that $\mathfrak{g}^{*} = \mathfrak{k}^{*}\oplus\mathfrak{p}^{*}$ such that 
\begin{align}
\ad_{\mathfrak{k}}^{*}\mathfrak{k}^{*}\subseteq\mathfrak{p}^{*}, \hspace{5pt} \ad_{\mathfrak{p}}^{*}\mathfrak{k}^{*}\subseteq\mathfrak{k}^{*}, \hspace{5pt} \ad_{\mathfrak{p}}^{*}\mathfrak{p}^{*}\subseteq\mathfrak{p}^{*}, \hspace{5pt} \ad_{\mathfrak{k}}^{*}\mathfrak{p}^{*}\subseteq\mathfrak{k}^{*}.\label{eq_rel}
\end{align}
It is also easy to verify that $\frac{\partial C}{\partial u}\in\mathfrak{k}^{*}$ and by definition $\lambda\in\mathfrak{p}^{*}$. We now have a splitting of the left hand side of \eqref{eq_ep_thm4} in $\mathfrak{k}^{*}$ and $\mathfrak{p}^{*}$. We have the following
\begin{align}
\ad_{u}^{*}\frac{\partial C}{\partial u} &= \ad_{e_{0}}^{*}\frac{\partial C}{\partial u}+\ad_{u-e_{0}}^{*}\frac{\partial C}{\partial u},\nonumber\\
\ad_{u}^{*}\lambda &= \ad_{e_{0}}^{*}\lambda+\ad_{u-e_{0}}^{*}\lambda,\nonumber
\end{align}
where $u-e_{0}\in\mathfrak{k}$. By using \eqref{eq_rel}, we have $\ad_{e_0}^{*}\frac{\partial C}{\partial u}\in\mathfrak{k}^{*}$, $\ad_{u-e_{0}}^{*}\frac{\partial C}{\partial u}\in\mathfrak{p}^{*}$, $\ad_{e_{0}}^{*}\lambda\in\mathfrak{p}^{*}$ and $\ad_{u-e_{0}}^{*}\lambda\in\mathfrak{k}^{*}$. We can also split $\mathbf{J}_{X}(\frac{\partial V_{\textnormal{ext}}}{\partial\alpha},\alpha)\in\mathfrak{g}^{*}$ in $\mathfrak{k}^{*}$ and $\mathfrak{p}^{*}$. We now have a splitting of the right hand side of \eqref{eq_ep_thm4} in $\mathfrak{k}^{*}$ and $\mathfrak{p}^{*}$. So, \eqref{eq_ep_thm4} splits into the following equations
\begin{align}
\frac{d}{dt}\frac{\partial C}{\partial u} &= \ad_{e_0}^{*}\frac{\partial C}{\partial u}+\ad_{u-e_{0}}^{*}\lambda+\mathbf{J}_{X}\left(\frac{\partial V_{\textnormal{ext}}}{\partial\alpha},\alpha\right)\bigg|_{\mathfrak{k}},\nonumber\\
\frac{d\lambda}{dt} &= \ad_{e_0}^{*}\lambda+\ad_{u-e_{0}}^{*}\frac{\partial C}{\partial u}+\mathbf{J}_{X}\left(\frac{\partial V_{\textnormal{ext}}}{\partial\alpha},\alpha\right)\bigg|_{\mathfrak{p}}.\nonumber
\end{align}
\end{proof}

\begin{remark}
Note that semisimple Lie algebras admit a Cartan decomposition, i.e., if $\mathfrak{g}$ is semisimple, then $\mathfrak{g} = \mathfrak{k}\oplus\mathfrak{p}$ such that 
\begin{align}
[\mathfrak{k},\mathfrak{k}]\subseteq\mathfrak{p}, \hspace{5pt} [\mathfrak{p},\mathfrak{k}]\subseteq\mathfrak{k}, \hspace{5pt} [\mathfrak{p},\mathfrak{p}]\subseteq\mathfrak{p},\nonumber
\end{align}
where $\mathfrak{k} = \{x\in\mathfrak{g}\mid\theta(x) = -x\}$ is the $-1$ eigenspace of the Cartan involution $\theta$ and $\mathfrak{p} = \{x\in\mathfrak{g}\mid\theta(x) = x\}$ is the $+1$ eigenspace of the Cartan involution $\theta$. In addition, the Killing form is positive definite on $\mathfrak{k}$ and negative definite on $\mathfrak{p}$. So, connected semisimple Lie groups are potential candidates that satisfy the assumption of Proposition \ref{prop7}. Conversely, a Cartan decomposition determines a Cartan involution $\theta$ (see, e.g., \cite{Knapp2002}). For more details, see \cite{D'Alessandro2007}, \cite{Helgason2001}, \cite{Knapp2002}. Also, note that the roles of $\mathfrak{k}$ and $\mathfrak{p}$ can be reversed in Proposition \ref{prop7}.
\end{remark}

\subsection{Legendre Transformation}
If we assume that the reduced Lagrangian $\ell_{a}$ is hyper-regular, then we can obtain the reduced Hamiltonian $h_{a}: \mathfrak{g}^{*}\times\mathfrak{p}^{*}\times X^{*}\to\mathbb{R}$ (by using a Legendre transformation) given by
\begin{align}
h_{a}(\mu,\lambda,\alpha) = \langle\mu,u\rangle-\ell_{a}(u,\lambda,\alpha),\nonumber
\end{align}
where $\mu = \frac{\partial\ell_{a}}{\partial u} = (\frac{\partial C}{\partial u}+\lambda)$, with $\mu(\cdot)\in C^{1}([0,T],\mathfrak{g}^{*})$. The Euler--Poincar{\'e} equations \eqref{eq_ep_thm4}--\eqref{eq_diff_ep_thm4} can now be written as the Lie--Poisson equations (see, e.g., \cite{Balmaz2010}, \cite{Holm1998}), which are given by
\begin{align}
\dot{\mu} &= \ad^{*}_{u}\mu+\mathbf{J}_{X}\left(\frac{\partial V_{\textnormal{ext}}}{\partial\alpha},\alpha\right),\label{eq_lp}\\
\dot{\alpha} &= \rho_{u}^{\prime*}(\alpha), \hspace{5pt} \alpha(0) = \rho^{*}_{g_{0}}(\alpha_{0}).\label{eq_diff_lp}
\end{align}

\begin{remark}
The Lie--Poisson equations \eqref{eq_lp}--\eqref{eq_diff_lp} are also obtained in \cite{Borum2015}, \cite{Borum2014} using Pontryagin's maximum principle. For more details, see \cite{Borum2015}, \cite{Borum2014}.
\end{remark}

\subsection{Discrete Lagrange--Pontryagin Principle}
Consider the discrete-time version of\\ $\textbf{(P1)}$ given by
\begin{align}
&\min_{\{u_{k}\}_{k=0}^{N-1}}J_{d}\colonequals\min_{\{u_{k}\}_{k=0}^{N-1}}\sum_{k=0}^{N-1}[C_{d}(g_{k},u_{k})+V_{d}(g_{k})]\nonumber
\intertext{subject to\hfill\textbf{(P2)}}
&g_{k+1} = g_{k}\tau(hu_{k}), \hspace{5pt} \text{with given boundary conditions} \hspace{5pt} g_{0} \hspace{5pt} \text{and} \hspace{5pt} g_{N},\nonumber
\end{align}
where $h\in\mathbb{R}_{>0}$ is the time-step, $\tau: \mathfrak{g}\to G$ is the retraction map (see, e.g., \cite{BouRabee2007}, \cite{BouRabee2009}, \cite{Kobilarov2009}, \cite{Kobilarov2007}, \cite{Kobilarov2011}) and the functions $C_{d}$ and $V_{d}$ satisfy assumptions \eqref{a1}--\eqref{a6}.

We can solve $\textbf{(P2)}$ using a discrete analogue of the Lagrange--Pontryagin variational principle. Define the augmented cost function as follows 
\begin{align}
J_{d,a} = \sum_{k=0}^{N-1} L_{d,a}(g_{k},g_{k+1},u_{k},\mu_{k},\alpha),\nonumber
\end{align}
where the augmented Lagrangian $L_{d,a}: G\times E\times\mathfrak{g}^{*}\times X^{*}\to\mathbb{R}$ is defined as follows
\begin{align}
L_{d,a}(g_{k},g_{k+1},u_{k},\mu_{k},\alpha) &= C_{d}(g_{k},u_{k}-e_{0})+V_{d,\textnormal{ext}}(g_{k},\alpha)\\
&\quad+h\left\langle\mu_k,\frac{1}{h}\tau^{-1}(g_k^{-1}g_{k+1})-(u_k-e_{0})\right\rangle.\nonumber
\end{align}
Define the left action of $G$ on $G\times E\times\mathfrak{g}^{*}\times X^{*}$ as follows
\begin{align}
\Gamma: G\times (G\times E\times\mathfrak{g}^{*}\times X^{*})&\longrightarrow G\times E\times\mathfrak{g}^{*}\times X^{*},\nonumber\\
(g,(h_{1},h_{2},u,\mu,\alpha))&\longmapsto(L_{g}(h_{1}),L_{g}(h_{2}),u,\mu,\rho_{g^{-1}}^{*}(\alpha)).\label{eq_gamma}
\end{align}
Under the assumption \eqref{a5}, it follows that $L_{d,a}: G\times E\times\mathfrak{g}^{*}\times X^{*}\to\mathbb{R}$ is $G$-invariant under \eqref{eq_gamma}, i.e., $L_{d,a}\circ\Gamma_{g} = L_{d,a}$, for any $g\in G$. In particular, under the assumption \eqref{a6}, it follows that the augmented Lagrangian $L_{d,a}(\cdot,\cdot,\cdot,\cdot,\alpha_{0})\equalscolon L_{d,a,\alpha_{0}}: G\times E\times\mathfrak{g}^{*}\to\mathbb{R}$ is $G_{\alpha_{0}}$-invariant under \eqref{eq_gamma}, i.e., $L_{d,a,\alpha_{0}}\circ\Gamma_{g} = L_{d,a,\alpha_{0}}$, for any $g\in G_{\alpha_{0}}$. We can now obtain the reduced augmented Lagrangian $\ell_{d,a}: E\times\mathfrak{g}^{*}\times X^{*}\to\mathbb{R}$, which is given by
\begin{align}
\ell_{d,a}(\pi(g_{k},g_{k+1}),u_{k},\mu_{k},\bar{\alpha}_{k})&\colonequals L_{d,a}(e,\pi(g_{k},g_{k+1}),u_{k},\mu_{k},\bar{\alpha}_{k}),\nonumber\\
&= C_{d}(u_{k}-e_{0})+V_{d,\textnormal{ext}}(\bar{\alpha}_{k})\nonumber\\
&\quad+h\left\langle\mu_k,\frac{1}{h}\tau^{-1}(\pi(g_{k},g_{k+1}))-(u_k-e_{0})\right\rangle,\nonumber
\end{align}
where $\pi(g_{k},g_{k+1})\colonequals g_{k}^{-1}g_{k+1}\in G$, $\bar{\alpha}_{k} = \rho^{*}_{g_{k}}(\alpha_{0})$ and with a slight abuse of notation, we write $C_{d}(e,u_{k}-e_{0}) = C_{d}(u_{k}-e_{0})$ and $V_{d,\textnormal{ext}}(\bar{\alpha}_{k}) = V_{d,\textnormal{ext}}(e,\bar{\alpha}_{k})$. A normal extremal for $\textbf{(P2)}$ satisfies the following discrete-time Lie--Poisson equations (see Theorem \ref{thm11} below for a proof)
\begin{align}
(d\tau_{hu_{k}}^{-1})^{*}\mu_{k} &= (d\tau_{-hu_{k-1}}^{-1})^{*}\mu_{k-1}+\mathbf{J}_{X}\left(\frac{\partial V_{d,\textnormal{ext}}}{\partial\bar{\alpha}_{k}},\bar{\alpha}_{k}\right),\label{eq_dlp}\\
\bar{\alpha}_{k+1} &= \rho_{\tau(hu_{k})}^{*}(\bar{\alpha}_{k}), \hspace{5pt} \bar{\alpha}_{0} = \rho^{*}_{g_{0}}(\alpha_{0})\label{eq_diff_dlp},
\end{align}
where $\mu_{k} = \frac{1}{h}\frac{\partial C_{d}}{\partial u_{k}}\in\mathfrak{g}^{*}$. To summarize, we have the following theorem but before proceeding further, we recall a lemma, which will be used in its proof.

\begin{lemma}[\cite{BouRabee2007}, \cite{BouRabee2009}, \cite{Kobilarov2011}]\label{lem10}
The following properties hold:
\begin{align}
\frac{1}{h}\delta\tau^{-1}(g_{k}^{-1}g_{k+1}) = \frac{1}{h}\delta\tau^{-1}(\pi(g_{k},g_{k+1})) = \frac{1}{h}d\tau_{hu_{k}}^{-1}(-\eta_{k}+\Ad_{\tau(hu_k)}\eta_{k+1}),
\end{align}
where $\eta_{k} = T_{g_{k}}L_{g_{k}^{-1}}(\delta{g_{k}})\in\mathfrak{g}$.
\begin{align}
(d\tau_{-hu_{k}}^{-1})^{*}\mu_{k} = \Ad_{\tau(hu_{k})}^{*}(d\tau_{hu_{k}}^{-1})^{*}\mu_{k},
\end{align}
where $\mu_{k}\in\mathfrak{g}^{*}$ and $d\tau^{-1}$ is the inverse right trivialized tangent of $\tau$.
\end{lemma}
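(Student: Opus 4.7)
The plan is to derive both identities directly from the definition of the right-trivialized tangent $d\tau_\xi$ and the standard property $\tau(-\xi) = \tau(\xi)^{-1}$ of a retraction map. Throughout, we use the shorthand $\xi_k \colonequals hu_k$, so that the constraint $g_{k+1} = g_k \tau(hu_k)$ reads $\tau(\xi_k) = \pi(g_k,g_{k+1})$.

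For the first identity, the key step is to vary both sides of $g_{k+1} = g_k \tau(\xi_k)$ and express everything in terms of $\eta_k = T_{g_k} L_{g_k^{-1}}(\delta g_k)$ and $\eta_{k+1}$. Using the Leibniz rule together with the definition $T_\xi \tau(\delta\xi) = T_e R_{\tau(\xi)}\bigl(d\tau_\xi(\delta\xi)\bigr)$, I would compute
\begin{align}
\delta g_{k+1} &= T_{g_k} R_{\tau(\xi_k)}(\delta g_k) + T_{\tau(\xi_k)} L_{g_k}\bigl(T_e R_{\tau(\xi_k)}(d\tau_{\xi_k}(\delta\xi_k))\bigr). \nonumber
\end{align}
Because $R_{\tau(\xi_k)} \circ L_{g_k} = L_{g_{k+1}} \circ C_{\tau(\xi_k)^{-1}}$, each term on the right becomes $T_e L_{g_{k+1}}$ applied to an element of $\mathfrak{g}$ obtained by an $\Ad_{\tau(\xi_k)^{-1}}$ conjugation. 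Matching with $\delta g_{k+1} = T_e L_{g_{k+1}}(\eta_{k+1})$ and inverting the translation then yields $\eta_{k+1} = \Ad_{\tau(\xi_k)^{-1}}\bigl(\eta_k + d\tau_{\xi_k}(\delta\xi_k)\bigr)$, so that $\delta\xi_k = d\tau_{\xi_k}^{-1}\bigl(-\eta_k + \Ad_{\tau(\xi_k)} \eta_{k+1}\bigr)$. Since $\xi_k = hu_k = \tau^{-1}(\pi(g_k,g_{k+1}))$, dividing by $h$ gives the stated formula.

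For the second identity, the strategy is to differentiate the identity $\tau(-\xi) = \tau(\xi)^{-1}$, which holds for the retraction maps considered in \cite{BouRabee2007}, \cite{BouRabee2009}, \cite{Kobilarov2011}. Using $\frac{d}{dt}g(t)^{-1} = -g(t)^{-1}\dot g(t)\,g(t)^{-1}$ with $g(t) = \tau(\xi + t\delta\xi)$, I would obtain
\begin{align}
T_e R_{\tau(-\xi)}\bigl(d\tau_{-\xi}(\delta\xi)\bigr) &= T_e\bigl(L_{\tau(-\xi)} \circ R_{\tau(\xi)}\bigr)\bigl(d\tau_\xi(\delta\xi)\bigr). \nonumber
\end{align}
Since $L_{\tau(-\xi)}\circ R_{\tau(\xi)}(h) = \tau(\xi)^{-1} h \tau(\xi) \cdot \tau(-\xi)$ factorises as $R_{\tau(-\xi)} \circ C_{\tau(\xi)^{-1}}$, differentiation at the identity produces the relation $d\tau_{-\xi} = \Ad_{\tau(\xi)^{-1}} \circ d\tau_\xi$. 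Inverting this and passing to the dual gives $(d\tau_{-\xi}^{-1})^* = \Ad_{\tau(\xi)}^* \circ (d\tau_\xi^{-1})^*$, and specialising to $\xi = hu_k$ yields the claim.

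The main delicate point will be keeping track of the left-versus-right translation conventions in the calculation of $\delta g_{k+1}$; in particular, verifying carefully that both the variation of $g_k$ and the variation of $\tau(\xi_k)$ translate into left-trivialized objects at $g_{k+1}$ via the same $\Ad_{\tau(\xi_k)^{-1}}$ factor. Once the identity $R_{\tau(\xi_k)} \circ L_{g_k} = L_{g_{k+1}} \circ C_{\tau(\xi_k)^{-1}}$ is in place, the rest is a bookkeeping exercise; similarly, the second identity reduces entirely to the algebraic property $\tau(-\xi) = \tau(\xi)^{-1}$, so the only subtlety is writing the curve inversion in a manifestly coordinate-free way.
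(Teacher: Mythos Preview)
The paper does not actually prove Lemma~\ref{lem10}; it is stated as a known result with citations to \cite{BouRabee2007}, \cite{BouRabee2009}, \cite{Kobilarov2011}, and then used as a black box in the proof of Theorem~\ref{thm11}. So there is no ``paper's proof'' to compare against. Your strategy---varying $g_{k+1} = g_k\tau(\xi_k)$ and reading off the left-trivialized result for the first identity, and differentiating $\tau(-\xi)=\tau(\xi)^{-1}$ for the second---is exactly the standard derivation one finds in those references, and your conclusions $d\tau_{\xi_k}(\delta\xi_k) = -\eta_k + \Ad_{\tau(\xi_k)}\eta_{k+1}$ and $d\tau_{-\xi} = \Ad_{\tau(\xi)^{-1}}\circ d\tau_\xi$ are correct.

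One caveat on execution: the displayed equation in your second part,
\[
T_e R_{\tau(-\xi)}\bigl(d\tau_{-\xi}(\delta\xi)\bigr) = T_e\bigl(L_{\tau(-\xi)} \circ R_{\tau(\xi)}\bigr)\bigl(d\tau_\xi(\delta\xi)\bigr),
\]
is not well-formed as written: the left side lives in $T_{\tau(-\xi)}G$ while the right side, since $L_{\tau(-\xi)}\circ R_{\tau(\xi)}$ is conjugation, lives in $T_eG$. You have also dropped a sign on the left (the curve $t\mapsto -\xi - t\,\delta\xi$ has velocity $-\delta\xi$), and the factorisation sentence that follows contains an extra factor of $\tau(-\xi)$. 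The clean version is: differentiating $\tau(-\xi-t\delta\xi)=\tau(\xi+t\delta\xi)^{-1}$ at $t=0$ gives $-T_eR_{\tau(-\xi)}\bigl(d\tau_{-\xi}(\delta\xi)\bigr) = -T_eL_{\tau(-\xi)}\bigl(d\tau_\xi(\delta\xi)\bigr)$, and then right-translating back to $e$ yields $d\tau_{-\xi}(\delta\xi)=\Ad_{\tau(\xi)^{-1}}\bigl(d\tau_\xi(\delta\xi)\bigr)$ directly. None of this affects your final answer, but the intermediate bookkeeping needs tidying before it would pass as a written proof.
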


\begin{theorem}\label{thm11}
A normal extremal for $\textnormal{\textbf{(P2)}}$, under the assumptions \eqref{a1}--\eqref{a6}, satisfies the following discrete-time Lie--Poisson equations
\begin{align}
(d\tau_{hu_{k}}^{-1})^{*}\mu_{k} &= (d\tau_{-hu_{k-1}}^{-1})^{*}\mu_{k-1}+\mathbf{J}_{X}\left(\frac{\partial V_{d,\textnormal{ext}}}{\partial\bar{\alpha}_{k}},\bar{\alpha}_{k}\right),\label{eq_dlp_thm11}\\
\bar{\alpha}_{k+1} &= \rho_{\tau(hu_{k})}^{*}(\bar{\alpha}_{k}), \hspace{5pt} \bar{\alpha}_{0} = \rho^{*}_{g_{0}}(\alpha_{0})\label{eq_diff_dlp_thm11},
\end{align}
where $\mu_{k} = \frac{1}{h}\frac{\partial C_{d}}{\partial u_{k}}\in\mathfrak{g}^{*}$.
\end{theorem}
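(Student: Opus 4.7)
The plan is to mirror the strategy of Theorem~\ref{thm4} in the discrete-time setting, using Lemma~\ref{lem10} in place of the continuous-time chain rule and discrete summation by parts in place of integration by parts. The overall structure is to set up discrete analogues of the two variational principles (a) and (b) appearing in the proof of Theorem~\ref{thm4}, pass to the reduced one by $G_{\alpha_{0}}$-invariance, take its variation, and collect like terms.

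First, I would formulate two equivalent discrete variational principles. The unreduced one reads $\delta \sum_{k=0}^{N-1} L_{d,a,\alpha_{0}}(g_{k},g_{k+1},u_{k},\mu_{k}) = 0$ for variations $\delta g_{k}$ with $\delta g_{0} = \delta g_{N} = 0$ together with free variations of $u_{k}$ and $\mu_{k}$. The reduced one reads $\delta \sum_{k=0}^{N-1} \ell_{d,a}(\pi(g_{k},g_{k+1}),u_{k},\mu_{k},\bar\alpha_{k}) = 0$ where the admissible variations are those induced by the $\delta g_{k}$ together with free variations of $u_{k}$ and $\mu_{k}$. Writing $\eta_{k} = T_{g_{k}}L_{g_{k}^{-1}}(\delta g_{k}) \in \mathfrak{g}$ with $\eta_{0} = \eta_{N} = 0$, the first identity in Lemma~\ref{lem10} gives $\delta\tau^{-1}(\pi(g_{k},g_{k+1})) = d\tau_{hu_{k}}^{-1}(-\eta_{k} + \Ad_{\tau(hu_{k})}\eta_{k+1})$, and differentiating $\bar\alpha_{k} = \rho^{*}_{g_{k}}(\alpha_{0})$ along $\delta g_{k}$ yields $\delta\bar\alpha_{k} = \rho^{\prime *}_{\eta_{k}}(\bar\alpha_{k})$ exactly as in the proof of Theorem~\ref{thm4}. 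The equivalence of the two principles is an immediate consequence of the $G_{\alpha_{0}}$-invariance of $L_{d,a,\alpha_{0}}$ under $\Gamma$ in \eqref{eq_gamma}.

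Next, I would compute the variation of the reduced action. Free variations in $\mu_{k}$ reproduce the reconstruction constraint relating $\pi(g_{k},g_{k+1})$ to $u_{k}$, and free variations in $u_{k}$ enforce $\mu_{k} = \tfrac{1}{h}\,\partial C_{d}/\partial u_{k}$. The variations induced by $\eta_{k}$ produce two contributions. The term coming from $V_{d,\textnormal{ext}}(\bar\alpha_{k})$, namely $\langle \rho^{\prime *}_{\eta_{k}}(\bar\alpha_{k}), \partial V_{d,\textnormal{ext}}/\partial\bar\alpha_{k}\rangle$, equals $\langle \mathbf{J}_{X}(\partial V_{d,\textnormal{ext}}/\partial\bar\alpha_{k}, \bar\alpha_{k}), \eta_{k}\rangle$ by the definition \eqref{eq_mp} of the momentum map, exactly as in the proof of Theorem~\ref{thm4}. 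The term $\langle \mu_{k}, \delta\tau^{-1}(\pi(g_{k},g_{k+1}))\rangle$ expands, using Lemma~\ref{lem10}, to $-\langle (d\tau_{hu_{k}}^{-1})^{*}\mu_{k}, \eta_{k}\rangle + \langle \Ad^{*}_{\tau(hu_{k})}(d\tau_{hu_{k}}^{-1})^{*}\mu_{k}, \eta_{k+1}\rangle$; the second identity in Lemma~\ref{lem10} rewrites the $\Ad^{*}$-twisted factor as $(d\tau_{-hu_{k}}^{-1})^{*}\mu_{k}$. A discrete summation by parts obtained by shifting $k \to k-1$ in the $\eta_{k+1}$-term, together with $\eta_{0} = \eta_{N} = 0$ to drop boundary contributions, collects everything into a single sum over $k = 1, \dots, N-1$ paired with $\eta_{k}$; arbitrariness of the interior $\eta_{k}$ then gives \eqref{eq_dlp_thm11}. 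The advection equation \eqref{eq_diff_dlp_thm11} follows from $\bar\alpha_{k+1} = \rho^{*}_{g_{k+1}}(\alpha_{0}) = \rho^{*}_{\tau(hu_{k})}(\bar\alpha_{k})$ by invoking the reconstruction $g_{k+1} = g_{k}\tau(hu_{k})$ and the right-representation property of $g \mapsto \rho^{*}_{g}$.

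The main obstacle is the careful bookkeeping in the discrete summation by parts combined with the $\Ad^{*}$-twist from Lemma~\ref{lem10}; this is the step that produces the asymmetric pair $(d\tau_{hu_{k}}^{-1})^{*}\mu_{k}$ versus $(d\tau_{-hu_{k-1}}^{-1})^{*}\mu_{k-1}$ in \eqref{eq_dlp_thm11}. Everything else is a faithful transcription of the argument in the proof of Theorem~\ref{thm4}.
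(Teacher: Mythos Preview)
Your proposal is correct and follows essentially the same route as the paper's own proof: you set up the same pair of discrete variational principles, invoke Lemma~\ref{lem10} for $\delta\tau^{-1}$ and the $\Ad^{*}$-identity, perform discrete summation by parts with $\eta_{0} = \eta_{N} = 0$, and read off \eqref{eq_dlp_thm11} from the coefficient of $\eta_{k}$. The only minor addition is your explicit mention of the $\delta\mu_{k}$-variation recovering the reconstruction constraint, which the paper leaves implicit.
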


\begin{proof}
Consider the following variational principles:\\

\begin{enumerate}[(a)]
\item{The variational principle 
\begin{align}
\delta\sum_{k=0}^{N-1} L_{d,a,\alpha_{0}}(g_{k},g_{k+1},u_{k},\mu_{k}) = 0,\nonumber
\end{align}
holds for all variations of $g_{k}$ (vanishing at the endpoints), $\tau^{-1}(g_{k}^{-1}g_{k+1})$ (induced by the variations of $g_{k}$) and $u_{k}$.\label{v1_thm11}}
\item{The variational principle
\begin{align}
\delta\sum_{k=0}^{N-1}\ell_{d,a}(\pi(g_{k},g_{k+1}),u_{k},\mu_{k},\bar{\alpha}_{k}) = 0,\nonumber
\end{align}
holds for all variations of $\tau^{-1}(\pi(g_{k},g_{k+1}))$ (induced by the variations of $g_{k}$ vanishing at the endpoints), $u_{k}$ and $\bar{\alpha}_{k}$ of the form
\begin{align}
\delta\bar{\alpha}_{k} = \rho_{\eta_{k}}^{\prime*}(\bar{\alpha}_{k}),\nonumber
\end{align}
where $\eta_{k}\in\mathfrak{g}$ vanishes at the endpoints.\label{v2_thm11}}
\end{enumerate}
\vspace{12pt} 
We will first show that the variational principle \eqref{v1_thm11} implies the variational principle \eqref{v2_thm11}. We begin by noting that $L_{d,a}: G\times E\times\mathfrak{g}^{*}\times X^{*}\to\mathbb{R}$ is $G$-invariant under \eqref{eq_gamma}, i.e., $L_{d,a}\circ\Gamma_{g} = L_{d,a}$, for any $g\in G$ and $\bar{\alpha}_{k} = \rho^{*}_{g_{k}}(\alpha_{0})$, so the summand in the variational principle \eqref{v1_thm11} is equal to the summand in the variational principle \eqref{v2_thm11}. In addition, we have $\delta\bar{\alpha}_{k} = \rho_{\eta_{k}}^{\prime*}(\bar{\alpha}_{k})$. Hence, the variational principle \eqref{v1_thm11} implies the variational principle \eqref{v2_thm11}.

A normal extremal for $\textbf{(P2)}$, satisfies the variational principle \eqref{v1_thm11} and hence, the variational principle \eqref{v2_thm11}. So, we have
\begin{align}
0 &= \delta\sum_{k=0}^{N-1}\ell_{d,a}(\pi(g_{k},g_{k+1}),u_{k},\mu_{k},\bar{\alpha}_{k})\nonumber\\
&= \sum_{k=0}^{N-1}\bigg[\left\langle\frac{\partial C_{d}}{\partial u_{k}},\delta u_{k}\right\rangle-h\langle\mu_{k},\delta u_{k}\rangle+\left\langle\delta{\bar{\alpha}_{k}},\frac{\partial V_{d,\textnormal{ext}}}{\partial\bar{\alpha}_{k}}\right\rangle\nonumber\\
&\quad+h\left\langle\mu_k,\frac{1}{h}d\tau^{-1}_{hu_k}(-\eta_{k}+\Ad_{\tau(hu_k)}\eta_{k+1})\right\rangle\bigg]\nonumber\\
&= \sum_{k=0}^{N-1}\bigg[\left\langle\frac{\partial C_{d}}{\partial u_{k}}-h\mu_{k},\delta u_{k}\right\rangle+\left\langle\delta{\bar{\alpha}_{k}},\frac{\partial V_{d,\textnormal{ext}}}{\partial\bar{\alpha}_{k}}\right\rangle\nonumber\\
&\quad+h\left\langle\mu_k,\frac{1}{h}d\tau^{-1}_{hu_k}(-\eta_{k}+\Ad_{\tau(hu_k)}\eta_{k+1})\right\rangle\bigg]\nonumber\\
&= \sum_{k=1}^{N-1}\bigg[\left\langle\frac{\partial C_{d}}{\partial u_{k}}-h\mu_{k},\delta u_{k}\right\rangle+\left\langle\rho_{\eta_{k}}^{\prime*}(\bar{\alpha}_{k}),\frac{\partial V_{d,\textnormal{ext}}}{\partial\bar{\alpha}_{k}}\right\rangle\nonumber\\
&\quad+\left\langle(d\tau_{-hu_{k-1}}^{-1})^{*}\mu_{k-1}-(d\tau_{hu_{k}}^{-1})^{*}\mu_{k},\eta_{k}\right\rangle\bigg]\nonumber\\
&= \sum_{k=1}^{N-1}\bigg[\left\langle\frac{\partial C_{d}}{\partial u_{k}}-h\mu_{k},\delta u_{k}\right\rangle\nonumber\\
&\quad+\left\langle(d\tau_{-hu_{k-1}}^{-1})^{*}\mu_{k-1}+\mathbf{J}_{X}\left(\frac{\partial V_{d,\textnormal{ext}}}{\partial\bar{\alpha}_{k}},\bar{\alpha}_{k}\right)-(d\tau_{hu_{k}}^{-1})^{*}\mu_{k},\eta_{k}\right\rangle\bigg],\nonumber
\end{align}
where we have used Lemma \ref{lem10}, the analogue of integration by parts in the discrete-time setting along with the fact that $\eta_{0} = \eta_{N} = 0$ and so, we have
\begin{align}
(d\tau_{hu_{k}}^{-1})^{*}\mu_{k} &= (d\tau_{-hu_{k-1}}^{-1})^{*}\mu_{k-1}+\mathbf{J}_{X}\left(\frac{\partial V_{d,\textnormal{ext}}}{\partial\bar{\alpha}_{k}},\bar{\alpha}_{k}\right).\nonumber
\end{align}
Finally, after a simple calculation, we have
\begin{align}
\bar{\alpha}_{k+1} = \rho_{\tau(hu_{k})}^{*}(\bar{\alpha}_{k}), \hspace{5pt} \bar{\alpha}_{0} = \rho^{*}_{g_{0}}(\alpha_{0})\nonumber.
\end{align}
\end{proof}

As in the continuous-time case, there are are two special cases of Theorem \ref{thm11} that are of particular interest and we state them as corollaries.

\begin{corollary}\label{cor12}
Let $X = \mathfrak{g}$ and $\rho$ be the adjoint representation of $G$ on $X$, i.e., $\rho_{g} = \Ad_{g}$, for any $g\in G$. Then, the discrete-time Lie--Poisson equations \eqref{eq_dlp_thm11}--\eqref{eq_diff_dlp_thm11} give the following equations
\begin{align}
(d\tau_{hu_{k}}^{-1})^{*}\mu_{k} &= (d\tau_{-hu_{k-1}}^{-1})^{*}\mu_{k-1}-\ad_{\frac{\partial V_{d,\textnormal{ext}}}{\partial\bar{\alpha}_{k}}}^{*}\bar{\alpha}_{k},\nonumber\\
\bar{\alpha}_{k+1} &= \Ad_{\tau(hu_{k})}^{*}\bar{\alpha}_{k}, \hspace{5pt} \bar{\alpha}_{0} = \Ad_{g_{0}}^{*}\alpha_{0}\nonumber,
\end{align}
where $\mu_{k} = \frac{1}{h}\frac{\partial C_{d}}{\partial u_{k}}\in\mathfrak{g}^{*}$.
\end{corollary}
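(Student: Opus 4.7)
The plan is to deduce this corollary as a direct specialization of Theorem \ref{thm11}, using the same representation-theoretic identities that were established in the proof of Corollary \ref{cor5}. Setting $X = \mathfrak{g}$ and $\rho = \Ad$, I would first record the identifications $X^{*} = \mathfrak{g}^{*}$ and $\rho^{*}_{g} = \Ad^{*}_{g}$ for every $g \in G$. Then the initial/propagation data $\bar{\alpha}_{0} = \rho^{*}_{g_{0}}(\alpha_{0})$ and $\bar{\alpha}_{k+1} = \rho^{*}_{\tau(hu_{k})}(\bar{\alpha}_{k})$ from Theorem \ref{thm11} immediately become $\bar{\alpha}_{0} = \Ad^{*}_{g_{0}}\alpha_{0}$ and $\bar{\alpha}_{k+1} = \Ad^{*}_{\tau(hu_{k})}\bar{\alpha}_{k}$, which is the second equation of the corollary.

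The only nontrivial step is rewriting the momentum-map term on the right-hand side of \eqref{eq_dlp_thm11}. For this, I would reuse verbatim the computation carried out in Corollary \ref{cor5}: since $\rho'_{\xi} = \ad_{\xi}$ for $\xi \in \mathfrak{g}$, the infinitesimal generator of the adjoint action is $\xi_{X}(x) = \ad_{\xi} x$, so from the defining relation \eqref{eq_mp} one obtains
\begin{equation}
\langle \mathbf{J}_{X}(x,\alpha), \xi \rangle = \langle \alpha, \ad_{\xi} x \rangle = -\langle \alpha, \ad_{x}\xi \rangle = \langle -\ad_{x}^{*}\alpha, \xi \rangle, \nonumber
\end{equation}
hence $\mathbf{J}_{X}(x,\alpha) = -\ad_{x}^{*}\alpha$. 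Applying this with $x = \partial V_{d,\textnormal{ext}}/\partial \bar{\alpha}_{k}$ and $\alpha = \bar{\alpha}_{k}$ converts the momentum-map term in \eqref{eq_dlp_thm11} into $-\ad^{*}_{\partial V_{d,\textnormal{ext}}/\partial \bar{\alpha}_{k}}\bar{\alpha}_{k}$, which is exactly the form asserted in the corollary.

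There is no real obstacle here: the entire argument is a substitution, and the genuinely nontrivial content (namely the derivation of \eqref{eq_dlp_thm11}--\eqref{eq_diff_dlp_thm11} via the discrete Lagrange--Pontryagin principle and Lemma \ref{lem10}) has already been done in the proof of Theorem \ref{thm11}. The one point I would flag explicitly in writing is that, because the momentum-map formula $\mathbf{J}_{X}(x,\alpha) = -\ad_{x}^{*}\alpha$ depends only on the representation $\rho$ and not on whether the underlying variational principle is continuous or discrete in time, the algebraic reduction is identical to the one in Corollary \ref{cor5}, and only the $\tau$-dependent left-hand side of the evolution equation differs.
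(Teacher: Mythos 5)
Your proposal is correct and follows exactly the route the paper intends: the paper's proof of Corollary \ref{cor12} is literally ``See Corollary \ref{cor5},'' i.e., one substitutes the identity $\mathbf{J}_{X}(x,\alpha) = -\ad_{x}^{*}\alpha$ (derived there for the adjoint representation) into the discrete-time Lie--Poisson equations of Theorem \ref{thm11} and rewrites $\rho^{*}$ as $\Ad^{*}$. Your write-up just makes that substitution explicit, which is fine.
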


\begin{proof}
See Corollary \ref{cor5}.
\end{proof}

\begin{corollary}\label{cor13}
Let $X = \mathfrak{g}^{*}$ and $\rho$ be the coadjoint representation of $G$ on $X$, i.e., $\rho_{g} = \Ad_{g^{-1}}^{*}$, for any $g\in G$. Then, the discrete-time Lie--Poisson equations \eqref{eq_dlp_thm11}--\eqref{eq_diff_dlp_thm11} give the following equations
\begin{align}
(d\tau_{hu_{k}}^{-1})^{*}\mu_{k} &= (d\tau_{-hu_{k-1}}^{-1})^{*}\mu_{k-1}+\ad_{\bar{\alpha}_{k}}^{*}\frac{\partial V_{d,\textnormal{ext}}}{\partial\bar{\alpha}_{k}},\nonumber\\
\bar{\alpha}_{k+1} &= \Ad_{\tau(hu_{k})^{-1}}\bar{\alpha}_{k}, \hspace{5pt} \bar{\alpha}_{0} = \Ad_{g_{0}^{-1}}\alpha_{0}\nonumber,
\end{align}
where $\mu_{k} = \frac{1}{h}\frac{\partial C_{d}}{\partial u_{k}}\in\mathfrak{g}^{*}$.
\end{corollary}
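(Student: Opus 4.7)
The plan is to specialize Theorem \ref{thm11} to the case $X = \mathfrak{g}^{*}$ with $\rho_{g} = \Ad_{g^{-1}}^{*}$. Since the discrete-time Lie--Poisson equations in Theorem \ref{thm11} already carry the momentum map $\mathbf{J}_{X}$ and the coadjoint action $\rho^{*}$, the proof reduces to two independent identifications: (i) computing $\mathbf{J}_{X}(x,\alpha)$ explicitly for the chosen representation, and (ii) expressing $\rho^{*}_{\tau(hu_{k})}(\bar{\alpha}_{k})$ and $\rho^{*}_{g_{0}}(\alpha_{0})$ in terms of $\Ad$.

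For step (i), the computation is identical to the one already carried out in the proof of Corollary \ref{cor6}, and I would simply cite it rather than repeat it. Concretely, $\alpha \in X^{*} = \mathfrak{g}^{**} \cong \mathfrak{g}$, the induced representation on $X^{*}$ is $\rho^{*}_{g} = \Ad_{g^{-1}}$, its derivative at the identity is $\rho^{\prime}_{\xi} = -\ad^{*}_{\xi}$ acting on $X = \mathfrak{g}^{*}$, and the infinitesimal generator is $\xi_{X}(x) = -\ad^{*}_{\xi}x$. Inserting this into the defining relation \eqref{eq_mp} and dualizing twice yields $\mathbf{J}_{X}(x,\alpha) = \ad^{*}_{\alpha}x$. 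Substituting this into \eqref{eq_dlp_thm11} with $x = \partial V_{d,\textnormal{ext}}/\partial\bar{\alpha}_{k}$ produces the first asserted equation.

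For step (ii), I would just plug $\rho^{*}_{g} = \Ad_{g^{-1}}$ into \eqref{eq_diff_dlp_thm11} and into the initial condition $\bar{\alpha}_{0} = \rho^{*}_{g_{0}}(\alpha_{0})$, which immediately gives $\bar{\alpha}_{k+1} = \Ad_{\tau(hu_{k})^{-1}}\bar{\alpha}_{k}$ and $\bar{\alpha}_{0} = \Ad_{g_{0}^{-1}}\alpha_{0}$.

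There is essentially no obstacle here; the corollary is a direct specialization and all the analytic content sits in Theorem \ref{thm11}. The only point requiring any care is the sign conventions when differentiating the coadjoint representation and dualizing to obtain the momentum map, but since this has already been done once in Corollary \ref{cor6}, the cleanest writeup is simply to observe that the momentum map computation is unchanged between the continuous- and discrete-time settings and refer the reader to that proof, mirroring the way Corollary \ref{cor12} refers back to Corollary \ref{cor5}.
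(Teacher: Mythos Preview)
Your proposal is correct and matches the paper's own approach exactly: the paper's entire proof of this corollary is the single line ``See Corollary \ref{cor6},'' which is precisely the cross-reference you advocate. Your additional remarks about specializing $\rho^{*}_{g} = \Ad_{g^{-1}}$ in \eqref{eq_diff_dlp_thm11} simply make explicit what the paper leaves implicit.
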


\begin{proof}
See Corollary \ref{cor6}.
\end{proof}

\section{Examples}\label{sec4}
We will now consider two examples.

\subsection{The Heavy Top as an Optimal Control Problem}
We will now show that the heavy top equations can also be obtained from OCPs for left-invariant control systems on Lie groups, with partial symmetry breaking cost functions.

We now consider $\textbf{(P1)}$, with $G = \mathrm{SO}(3)$, $e_{0} = 0_{3\times 3}$ and with the cost function given by
\begin{align}
C(g,u) &= \frac{1}{2}\langle u,\mathbb{I}u\rangle,\nonumber\\
V(g) &= -m\mathbf{g}l\langle\mathbf{e}_{3},g\boldsymbol{\chi}\rangle,\nonumber
\end{align}
where $\langle\xi,\xi\rangle\colonequals\tr(\xi^{T}\xi)$, for any $\xi\in\mathfrak{g} = \mathfrak{so}(3)$, $u(\cdot) = \sum_{i=1}^{3}u^{i}(\cdot)e_{i}$, with the elements of the basis of $\mathfrak{g}$ (which is semisimple) given by
\begin{align}
e_{1} = \begin{bmatrix}
        0 & \phantom{-}0 & \phantom{-}0 \\
        0 & \phantom{-}0 & -1 \\
        0 & \phantom{-}1 & \phantom{-}0
        \end{bmatrix}, \hspace{5pt} 
e_{2} = \begin{bmatrix}
        \phantom{-}0 & \phantom{-}0 & \phantom{-}1 \\
        \phantom{-}0 & \phantom{-}0 & \phantom{-}0 \\
        -1 & \phantom{-}0 & \phantom{-}0
      \end{bmatrix}, \hspace{5pt}
e_{3} = \begin{bmatrix}
        0 & -1 & \phantom{-}0 \\
        1 & \phantom{-}0 & \phantom{-}0 \\
        0 & \phantom{-}0 & \phantom{-}0
        \end{bmatrix},\nonumber
\end{align}
which satisfy
\begin{align}
[e_{1},e_{2}] = e_{3}, \hspace{5pt} [e_{2},e_{3}] = e_{1}, \hspace{5pt} [e_{3},e_{1}] = e_{2},\nonumber
\end{align}
$\mathbb{I}: \mathfrak{g}\to\mathfrak{g}^{*} = \mathfrak{so}(3)^{*}$ is the inertia tensor of the top\footnote{It is calculated with respect to the pivot, which is not in general the center of mass.}, $m$ is the mass of the body, $\mathbf{g}$ is the acceleration due to gravity, $\mathbf{e}_{3}$ is the vertical unit vector, $\boldsymbol{\chi}\in\mathbb{R}^{3}$ is the unit vector from the point of support to the direction of the body's center of mass (constant) in body coordinates and $l$ is the length of the line segment between these two points.

Under the dual pairing, where $\langle\alpha,\xi\rangle\colonequals\tr(\alpha\xi)$, for any $\xi\in\mathfrak{g}$ and $\alpha\in\mathfrak{g}^{*}$, the elements of the basis of $\mathfrak{g}^{*}$ are given by
\begin{align}
e^{1} = \begin{bmatrix}
        0 & \phantom{-}0 & \phantom{-}0\\[4pt]
        0 & \phantom{-}0 & \phantom{-}\dfrac{1}{2}\\[4pt]
        0 & -\dfrac{1}{2} & \phantom{-}0
        \end{bmatrix}, \hspace{5pt} 
e^{2} = \begin{bmatrix}
        \phantom{-}0 & \phantom{-}0 & -\dfrac{1}{2}\\[8pt]
        \phantom{-}0 & \phantom{-}0 & \phantom{-}0\\[4pt]
        \phantom{-}\dfrac{1}{2} & \phantom{-}0 & \phantom{-}0
        \end{bmatrix}, \hspace{5pt}
e^{3} = \begin{bmatrix}
        \phantom{-}0 & \phantom{-}\dfrac{1}{2} & \phantom{-}0\\[4pt]
        -\dfrac{1}{2} & \phantom{-}0 & \phantom{-}0\\[8pt]
        \phantom{-}0 & \phantom{-}0 & \phantom{-}0
        \end{bmatrix}.\nonumber
\end{align}
It is easy to verify that the potential function is $\mathrm{SO}(2)$-invariant but not $G$-invariant and so, the potential function breaks the symmetry partially. We will now show how the theory developed in the paper can be applied to the OCP under consideration.

Let $X = \mathfrak{g}$ and $\rho$ be the adjoint representation of $G$ on $X$, i.e., $\rho_{g} = \Ad_{g}$, for any $g\in G$. So, $\rho^{*}$ is the coadjoint representation of $G$ on $X^{*} = \mathfrak{g}^{*}$, i.e., $\rho^{*}_{g} = \Ad_{g}^{*}$, for any $g\in G$. Let the extended potential function $V_{\textnormal{ext}}: G\times X^{*}\to\mathbb{R}$ be given as follows
\begin{align}
V_{\textnormal{ext}}(g,\breve{\alpha}) = -m\mathbf{g}l\langle g^{-1}\alpha,\boldsymbol{\chi}\rangle,\nonumber
\end{align}
where $\alpha\in\mathbb{R}^{3}$ is identified with $\breve{\alpha}\in\mathfrak{g}^{*}$ (see, e.g., \cite[Section 5.3]{Holm2009}). If we set $\breve{\alpha}_{0} = -2e^{3}$ in the extended potential function, we recover our original potential function. It is easy to verify that the extended potential function is $G$-invariant under \eqref{eq_phi}, i.e., $V_{\textnormal{ext}}\circ\Phi_{g} = V_{\textnormal{ext}}$, for any $g \in G$. It is also easy to verify that the potential function is invariant under the left action of the isotropy group
\begin{align}
G_{\alpha_{0}}\colonequals\{g\in G\mid\Ad_{g}^{*}\alpha_{0} = \alpha_{0}\}\cong\mathrm{SO}(2),\nonumber
\end{align}
i.e., rotations about the vertical axis $\mathbf{e}_{3}$. We can now see that the assumptions \eqref{a1}--\eqref{a6} are satisfied. By Corollary \ref{cor5}, the Euler--Poincar{\'e} equations \eqref{eq_ep_thm4}--\eqref{eq_diff_ep_thm4} (under the identifications $\mathfrak{g}\cong\mathbb{R}^{3}$ and $\mathfrak{g}^{*}\cong\mathbb{R}^{3}$) give the following equations
\begin{align}
\mathbf{I}\dot{u} &= \ad_{u}^{*}\mathbf{I}u-\ad_{\frac{\partial V_{\textnormal{ext}}}{\partial\alpha}}^{*}\alpha,\label{eq_ep1_ht}\\
\dot{\alpha} &= \ad_{u}^{*}\alpha, \hspace{5pt} \alpha(0) = \Ad_{g_{0}}^{*}\alpha_{0},\label{eq_diff_ht}
\end{align}
where
\begin{align}
\ad_{u}^{*}\mathbf{I}u = \mathbf{I}u\times u, \hspace{5pt} \ad_{\frac{\partial V_{\textnormal{ext}}}{\partial\alpha}}^{*}\alpha = -m\mathbf{g}l\hspace{2pt}\alpha\times\boldsymbol{\chi}, \hspace{5pt} \ad_{u}^{*}\alpha = \alpha\times u, \hspace{5pt} \Ad_{g_{0}}^{*}\alpha_{0} = g_{0}^{-1}\alpha_{0}\nonumber
\end{align}
and $\mathbf{I}\in\mathbb{R}^{3\times 3}$ is the inertia matrix. For more details, see \cite{Holm2009}, \cite{Marsden1999}. So, \eqref{eq_ep1_ht}--\eqref{eq_diff_ht} give the following equations
\begin{align}
\mathbf{I}\dot{u} &= \mathbf{I}u\times u-m\mathbf{g}l\hspace{2pt}\boldsymbol{\chi}\times\alpha,\nonumber\\
\dot{\alpha} &= \alpha\times u, \hspace{5pt} \alpha(0) = g_{0}^{-1}\alpha_{0}.\nonumber
\end{align}
We do not give all the details and leave it up to reader to verify that the Lie--Poisson equations \eqref{eq_lp}--\eqref{eq_diff_lp} are given by 
\begin{align}
\dot{\mu} &= \mu\times\mathbf{I}^{-1}\mu-m\mathbf{g}l\hspace{2pt}\boldsymbol{\chi}\times\alpha,\nonumber\\
\dot{\alpha} &= \alpha\times\mathbf{I}^{-1}\mu, \hspace{5pt} \alpha(0) = g_{0}^{-1}\alpha_{0},\nonumber
\end{align}
which are the well known heavy top equations (see, e.g., \cite[Section 7.4]{Holm2009}, \cite[Section 1.4]{Marsden1999}).

\subsection{Motion Planning of a Unicycle with Obstacles}\label{subsec4.2}
We study the OCP for the motion planning of a unicycle with obstacles. To avoid the obstacle, we use the barrier function approach (see, e.g, \cite{Hauser2006}, \cite{Saccon2012}), which plays the role of the potential function in the cost function of the OCP.

Our model for a unicycle is a homogeneous disk rolling on a horizontal plane maintaining its vertical position (see, e.g., \cite[Section 1.4]{Bloch2015}). The configuration of the unicycle at any given time is completely determined by the element $g\in\mathrm{SE}(2)\cong\mathbb{R}^{2}\times\mathrm{S}^{1}\cong\mathbb{R}^{2}\times\mathrm{SO}(2)$ (as a set) given by
\begin{align}
g = \begin{bmatrix}
    \cos\theta & -\sin\theta & \phantom{-}x\\
    \sin\theta & \phantom{-}\cos\theta & \phantom{-}y\\
    0 & \phantom{-}0 & \phantom{-}1
  \end{bmatrix},\nonumber
\end{align}
where $[x \hspace{5pt} y]^{T}\in\mathbb{R}^{2}$ represents the point of contact of the wheel with the ground and $\theta\in\mathrm{S}^{1}$ represents the angular orientation of the overall system (see Figure \ref{Fig1}). For more details, see \cite{Bloch2015}, \cite{Krishnaprasad1993}. The controlled equations for the unicycle are given by
\begin{align}
\dot{x} &= u^{2}\cos\theta,\label{eq_uc1}\\
\dot{y} &= u^{2}\sin\theta,\label{eq_uc2}\\
\dot{\theta}&= u^{1}.\label{eq_uc3}
\end{align}
We can view \eqref{eq_uc1}--\eqref{eq_uc3} as a left-invariant control system on $\mathrm{SE}(2)$ (see \cite{Krishnaprasad1993}). Note that \eqref{eq_uc1}--\eqref{eq_uc2} are equivalent to the nonholonomic constraint $\dot{x}\sin\theta-\dot{y}\cos\theta = 0$.

\begin{figure}[h]
\centering
\usetikzlibrary{arrows}
\begin{tikzpicture}[>=latex']
\begin{scope}[scale=1]
\draw[->,thick] (0,0) to (4,0);
\draw[->,thick] (0,0) to (2,2);
\draw[dashed] (1,1) to (3.8,1);
\draw[dashed] (2,0) to (3,1);
\draw[fill=black] (3,2.5) -- (2.85,2.5) arc (90:270:0.4 and 0.75) -- (3,1) arc (270:90:0.4 and 0.75);
\draw[dashed] (3,1) -- ++(30:0.8);
\draw[black,fill=white] (3,1.75) ellipse (0.4 and 0.75);
\draw [->,thick] (3.565,1) arc (0:17.15:1);
\draw (3.565,1.2) node[right] {\large $\theta$};
\draw (1.95,-0.05) node[below] {\large $x$} to (2.05,0.05);
\draw (0.95,1) node[left] {\large $y$} to (1.05,1);
\end{scope}
\end{tikzpicture}
\caption{The Unicycle.}\label{Fig1}
\end{figure}
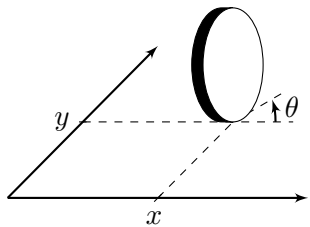

Let the obstacle be circular in shape and be located in the $x$-$y$ plane. Without loss of generality, assume that the obstacle has unit radius, with its center located at the point $(0,0)$. Let the potential function $V:\mathbb{R}^{2}\setminus S_{1}\to\mathbb{R}$ be given by 
\begin{align}
V(x,y) = \frac{\kappa}{2(x^{2}+y^{2}-1)},\nonumber
\end{align}
where $S_{1}\colonequals\{(x,y)\in\mathbb{R}^{2}\mid x^{2}+y^{2} = 1\}$ and $\kappa\in\mathbb{R}_{\geq 0}$. Let us now equip $\mathfrak{se}(2)$ with the inner product $\langle\xi,\xi\rangle\colonequals\tr(\xi^{T}\xi)$, for any $\xi\in\mathfrak{se}(2)$ and hence, the norm $\|\xi\|\colonequals\langle\xi,\xi\rangle^{1/2} = \sqrt{\tr(\xi^{T}\xi)}$, for any $\xi\in\mathfrak{se}(2)$. Equivalently, the potential function $V: \mathrm{SE}(2)\setminus S_{2}\to\mathbb{R}$ is given by
\begin{align}
V(g) = \frac{\kappa}{2(\|\Ad_{g^{-1}}e_{1}\|^{2}-1)},\nonumber
\end{align}
where $S_{2}\colonequals\{g\in\mathrm{SE}(2)\mid\|\Ad_{g^{-1}}e_{1}\| = 1\}$ and $e_{1}$ is given below. We now consider $\textbf{(P1)}$, with $G = \mathrm{SE}(2)$, $e_{0} = 0_{3\times 3}$ and with the cost function given by
\begin{align}
C(g,u) &= \frac{1}{2}\langle u,u\rangle,\nonumber\\
V(g) &= \frac{\kappa}{2(\|\Ad_{g^{-1}}e_{1}\|^{2}-1)},\nonumber
\end{align}
where $u(\cdot) = \sum_{i=1}^{2}u^{i}(\cdot)e_{i}$, with the elements of the basis of $\mathfrak{g} = \mathfrak{se}(2)$ (which is not semisimple) given by
\begin{align}
e_{1} = \begin{bmatrix}
        0 & -1 & \phantom{-}0\\
        1 & \phantom{-}0 & \phantom{-}0\\
        0 & \phantom{-}0 & \phantom{-}0
        \end{bmatrix}, \hspace{5pt}
e_{2} = \begin{bmatrix}
        0 & \phantom{-}0 & \phantom{-}1\\
        0 & \phantom{-}0 & \phantom{-}0\\
        0 & \phantom{-}0 & \phantom{-}0
        \end{bmatrix}, \hspace{5pt}
e_{3} = \begin{bmatrix}
        0 & \phantom{-}0 & \phantom{-}0\\
        0 & \phantom{-}0 & \phantom{-}1\\
        0 & \phantom{-}0 & \phantom{-}0
        \end{bmatrix},\nonumber
\end{align}
which satisfy
\begin{align}
[e_{1},e_{2}] = e_{3}, \hspace{5pt} [e_{2},e_{3}] = 0_{3\times 3}, \hspace{5pt} [e_{3},e_{1}] = e_{2}.\nonumber
\end{align}
Under the dual pairing, where $\langle\alpha,\xi\rangle\colonequals\tr(\alpha\xi)$, for any $\xi\in\mathfrak{g}$ and $\alpha\in\mathfrak{g}^{*} = \mathfrak{se}(2)^{*}$, the elements of the basis of $\mathfrak{g}^{*}$ are given by
\begin{align}
e^{1} = \begin{bmatrix}
        \phantom{-}0 & \phantom{-}\dfrac{1}{2} & \phantom{-}0\\
        -\dfrac{1}{2} & \phantom{-}0 & \phantom{-}0\\[8pt]
        \phantom{-}0 & \phantom{-}0 & \phantom{-}0
        \end{bmatrix}, \hspace{5pt}
e^{2} = \begin{bmatrix}
        0 & \phantom{-}0 & \phantom{-}0\\
        0 & \phantom{-}0 & \phantom{-}0\\
        1 & \phantom{-}0 & \phantom{-}0
        \end{bmatrix}, \hspace{5pt}
e^{3} = \begin{bmatrix}
        0 & \phantom{-}0 & \phantom{-}0\\
        0 & \phantom{-}0 & \phantom{-}0\\
        0 & \phantom{-}1 & \phantom{-}0
        \end{bmatrix}.\nonumber
\end{align}
It is easy to verify that the potential function is $\mathrm{SO}(2)$-invariant but not $G$-invariant and so, the potential function breaks the symmetry partially. We will now show how the theory developed in the paper can be applied to the OCP under consideration.

Let $X = \mathfrak{g}^{*}$ and $\rho$ be the coadjoint representation of $G$ on $X$, i.e., $\rho_{g} = \Ad_{g^{-1}}^{*}$, for any $g\in G$. So, $\rho^{*}$ is the adjoint representation of $G$ on $X^{*} = \mathfrak{g}^{**}\cong\mathfrak{g}$, i.e., $\rho^{*}_{g} = \Ad_{g^{-1}}$, for any $g\in G$. Let the extended potential function $V_{\textnormal{ext}}: G\times X^{*}\to\mathbb{R}$ be given as follows
\begin{align}
V_{\textnormal{ext}}(g,\alpha) = \frac{\kappa}{2(\|\Ad_{g^{-1}}\alpha\|^{2}-1)}.\nonumber
\end{align}
If we set $\alpha_{0} = e_{1}$ in the extended potential function, we recover our original potential function. It is easy to verify that the extended potential function is $G$-invariant under \eqref{eq_phi}, i.e., $V_{\textnormal{ext}}\circ\Phi_{g} = V_{\textnormal{ext}}$, for any $g \in G$. It is also easy to verify that the potential function is invariant under the left action of the isotropy group
\begin{align}
G_{\alpha_{0}}\colonequals\{g\in G\mid\Ad_{g^{-1}}\alpha_{0} = \alpha_{0}\} = \left\{\begin{bmatrix}\cos\theta & -\sin\theta &\phantom{-}0\\\sin\theta & \phantom{-}\cos\theta & \phantom{-}0\\0 & \phantom{-}0 & \phantom{-}1\end{bmatrix}\hspace{4pt}\middle|\hspace{4pt}\theta\in\mathrm{S}^{1}\right\}\cong\mathrm{SO}(2).\nonumber
\end{align}
We can now see that the assumptions \eqref{a1}--\eqref{a6} are satisfied and we also have $\mathfrak{g} = \mathfrak{k}\oplus\mathfrak{p}$ such that 
\begin{align}
[\mathfrak{k},\mathfrak{k}]\subseteq\mathfrak{p}, \hspace{5pt} [\mathfrak{p},\mathfrak{k}]\subseteq\mathfrak{k}, \hspace{5pt} [\mathfrak{p},\mathfrak{p}]\subseteq\mathfrak{p}.\nonumber
\end{align}
By Corollary \ref{cor6} and Proposition \ref{prop7}, the Euler--Poincar{\'e} equations \eqref{eq_ep_thm4}--\eqref{eq_diff_ep_thm4} give the following equations
\begin{align}
\dot{u} &= \ad_{u}^{*}\lambda+\ad_{\alpha}^{*}\frac{\partial V_{\textnormal{ext}}}{\partial\alpha}\bigg|_{\mathfrak{k}},\label{eq_ep1_uc}\\
\dot{\lambda} &= \ad_{u}^{*}u+\ad_{\alpha}^{*}\frac{\partial V_{\textnormal{ext}}}{\partial\alpha}\bigg|_{\mathfrak{p}},\label{eq_ep2_uc}\\
\dot{\alpha} &= -\ad_{u}\alpha, \hspace{5pt} \alpha(0) = \Ad_{g_{0}^{-1}}\alpha_{0},\label{eq_diff_uc}
\end{align}
where 
\begin{align}
\ad_{u}^{*}\lambda &= \begin{bmatrix} 
                      0 & -\dfrac{u^{2}\lambda_{3}}{2} & \phantom{-}0\\
										  \dfrac{u^{2}\lambda_{3}}{2} & \phantom{-}0 & \phantom{-}0\\[8pt] 
										  u^{1}\lambda_{3} & \phantom{-}0 & \phantom{-}0
										  \end{bmatrix}, \hspace{5pt}
\ad_{\alpha}^{*}\frac{\partial V_{\textnormal{ext}}}{\partial\alpha} = 
\frac{\kappa\alpha^{1}}{(\|\alpha\|^{2}-1)^{2}}\begin{bmatrix}
                                               \phantom{-}0 & \phantom{-}0 & \phantom{-}0\\
                                               \phantom{-}0 & \phantom{-}0 & \phantom{-}0\\
                                               -\alpha^{3} & \phantom{-}\alpha^{2} & \phantom{-}0
                                               \end{bmatrix},\nonumber\\										
&\ad_{u}^{*}u = \begin{bmatrix} 
                0 & \phantom{-}0 & \phantom{-}0\\
							  0 & \phantom{-}0 & \phantom{-}0\\
							  0 & -u^{1}u^{2} & \phantom{-}0
							  \end{bmatrix}, \hspace{5pt}
\ad_{u}\alpha = \begin{bmatrix}
                0 & \phantom{-}0 & -u^{1}\alpha^{3}\\
                0 & \phantom{-}0 & \phantom{-}u^{1}\alpha^{2}-u^{2}\alpha^{1}\\
                0 & \phantom{-}0 & \phantom{-}0
                \end{bmatrix}\nonumber
\end{align}								
and
\begin{align}								
\Ad_{g_{0}^{-1}}\alpha_{0} = \begin{bmatrix}
                             0 & -1 & \phantom{-}x_{0}\sin\theta_{0}-y_{0}\cos\theta_{0}\\
                             1 & \phantom{-}0 & \phantom{-}x_{0}\cos\theta_{0}+y_{0}\sin\theta_{0}\\
                             0 & \phantom{-}0 & \phantom{-}0
                             \end{bmatrix}.\nonumber
\end{align}
For more details, see \cite[Section 14.6]{Marsden1999}. So, \eqref{eq_ep1_uc}--\eqref{eq_diff_uc} give the following equations
\begin{align}
\dot{u}^{1} &= -\frac{u^{2}\lambda_{3}}{2},\nonumber\\
\dot{u}^{2} &= u^{1}\lambda_{3}-\frac{\kappa\alpha^{1}\alpha^{3}}{(\|\alpha\|^{2}-1)^{2}},\nonumber\\
\dot{\lambda}_{3} &= -u^{1}u^{2}+\frac{\kappa\alpha^{1}\alpha^{2}}{(\|\alpha\|^{2}-1)^{2}},\nonumber\\
\dot{\alpha}^{1} &= 0, \hspace{5pt} \alpha^{1}(0) = 1,\nonumber\\
\dot{\alpha}^{2} &= u^{1}\alpha^{3}, \hspace{5pt} \alpha^{2}(0) = x_{0}\sin\theta_{0}-y_{0}\cos\theta_{0},\nonumber\\
\dot{\alpha}^{3} &= -u^{1}\alpha^{2}+u^{2}\alpha^{1}, \hspace{5pt} \alpha^{3}(0) = x_{0}\cos\theta_{0}+y_{0}\sin\theta_{0}.\nonumber
\end{align}
We do not give all the details and leave it up to reader to verify that the Lie--Poisson equations \eqref{eq_lp}--\eqref{eq_diff_lp} are given by 
\begin{align}
\dot{\mu}_{1} &= -\mu_{2}\mu_{3},\nonumber\\
\dot{\mu}_{2} &= \frac{\mu_{1}\mu_{3}}{2}-\frac{\kappa\alpha^{1}\alpha^{3}}{(\|\alpha\|^{2}-1)^{2}},\nonumber\\
\dot{\mu}_{3} &= -\frac{\mu_{1}\mu_{2}}{2}+\frac{\kappa\alpha^{1}\alpha^{2}}{(\|\alpha\|^{2}-1)^{2}},\nonumber\\
\dot{\alpha}^{1} &= 0, \hspace{5pt} \alpha^{1}(0) = 1,\nonumber\\
\dot{\alpha}^{2} &= \frac{\mu_{1}\alpha^{3}}{2}, \hspace{5pt} \alpha^{2}(0) = x_{0}\sin\theta_{0}-y_{0}\cos\theta_{0},\nonumber\\
\dot{\alpha}^{3} &= -\frac{\mu_{1}\alpha^{2}}{2}+\mu_{2}\alpha^{1}, \hspace{5pt} \alpha^{3}(0) = x_{0}\cos\theta_{0}+y_{0}\sin\theta_{0}.\nonumber
\end{align}

\begin{remark}
Let us consider the case, when the OCP under consideration has no potential function. In this case, the Euler--Poincar{\'e} equations \eqref{eq_ep_thm4} give the following equations
\begin{align}
\dot{u}^{1} &= -\frac{u^{2}\lambda_{3}}{2},\label{eq_ep_np_1}\\
\dot{u}^{2} &= u^{1}\lambda_{3},\\
\dot{\lambda}_{3} &= -u^{1}u^{2}.\label{eq_ep_np_3}
\end{align}
We will now show that the Euler--Poincar{\'e} equations \eqref{eq_ep_np_1}--\eqref{eq_ep_np_3} are exactly the Euler--Lagrange equations, when we view the OCP under consideration as a constrained variational problem. The OCP under consideration is equivalent to the following constrained variational problem
\begin{align}
&\min_{([x(\cdot) \hspace{5pt} y(\cdot)]^{T},\theta(\cdot))}J\colonequals\min_{([x(\cdot) \hspace{5pt} y(\cdot)]^{T},\theta(\cdot))}\frac{1}{2}\int_{0}^{T}[\dot{x}^{2}(t)+\dot{y}^{2}(t)+2\dot{\theta}^{2}(t)]dt\nonumber
\intertext{subject to\hfill\textbf{(P3)}}
&\dot{x}(t)\sin\theta(t)-\dot{y}(t)\cos\theta(t) = 0, \hspace{5pt} \text{with given boundary conditions} \hspace{5pt} ([x(0) \hspace{5pt} y(0)]^{T},\theta(0))\nonumber\\
&\text{and} \hspace{5pt} ([x(T) \hspace{5pt} y(T)]^{T},\theta(T)).\nonumber
\end{align}
The Lagrangian for $\textbf{(P3)}$ is given by
\begin{align}
L_{a}(\theta,\dot{x},\dot{y},\dot{\theta},\lambda) = \frac{1}{2}(\dot{x}^{2}+\dot{y}^{2}+2\dot{\theta}^{2})+\lambda(\dot{y}\cos\theta-\dot{x}\sin\theta),\nonumber
\end{align}
where $\lambda$ is the Lagrange multiplier. A solution for $\textbf{(P3)}$ must satisfy the following Euler--Lagrange equations
\begin{align}
\ddot{x} &= \dot{\lambda}\sin\theta-\lambda\dot{\theta}\cos\theta,\label{eq_el_1}\\
\ddot{y} &= \dot{\lambda}\cos\theta-\lambda\dot{\theta}\sin\theta,\\
\ddot{\theta} &= \frac{\lambda}{2}(\dot{x}\cos\theta+\dot{y}\sin\theta).\label{eq_el_3}
\end{align}
Using the facts that $\dot{x}\sin\theta-\dot{y}\cos\theta = 0$, $u^{1} = \dot{\theta}$, $u^{2} = \dot{x}\cos\theta+\dot{y}\sin\theta$ and after a few simple calculations, the Euler--Lagrange equations \eqref{eq_el_1}--\eqref{eq_el_3} give the following equations
\begin{align}
\dot{u}^{1} &= -\frac{u^{2}\lambda}{2},\nonumber\\
\dot{u}^{2} &= u^{1}\lambda,\nonumber\\
\dot{\lambda} &= -u^{1}u^{2},\nonumber
\end{align}
which are exactly the Euler--Poincar{\'e} equations \eqref{eq_ep_np_1}--\eqref{eq_ep_np_3}.
\end{remark}

\begin{remark}
For the above example, in the discrete-time setting, one would choose
\begin{align}
C_{d}(g_{k},u_{k}) &= \frac{h}{2}\langle u_{k},u_{k}\rangle,\nonumber\\
V_{d}(g_{k}) &= \frac{h\kappa}{2(\|\Ad_{g_{k}^{-1}}e_{1}\|^{2}-1)},\nonumber
\end{align}
where 
\begin{align}
g_{k} = \begin{bmatrix}
        \cos\theta_{k} & -\sin\theta_{k} & \phantom{-}x_{k}\\
        \sin\theta_{k} & \phantom{-}\cos\theta_{k} & \phantom{-}y_{k}\\
        0 & \phantom{-}0 & \phantom{-}1
        \end{bmatrix}\in G\nonumber
\end{align}
and $u_{k} = \sum_{i=1}^{2}u_{k}^{i}e_{i}\in\mathfrak{g}$. Also, in the discrete-time setting, the extended potential function $V_{d,\textnormal{ext}}: G\times X^{*}\to\mathbb{R}$ can be constructed in exactly the same way as in the above example and is given by
\begin{align}
V_{d,\textnormal{ext}}(g_{k},\alpha) = \frac{h\kappa}{2(\|\Ad_{g_{k}^{-1}}\alpha\|^{2}-1)}.\nonumber
\end{align}
We do not give all the details again and leave it up to reader to verify that the assumptions \eqref{a1}--\eqref{a6} are satisfied. By Corollary \ref{cor13}, the discrete-time Lie--Poisson equations \eqref{eq_dlp_thm11}-\eqref{eq_diff_dlp_thm11} give the following equations
\begin{align}
(d\tau_{hu_{k}}^{-1})^{*}\mu_{k} &= (d\tau_{-hu_{k-1}}^{-1})^{*}\mu_{k-1}+\ad_{\bar{\alpha}_{k}}^{*}\frac{\partial V_{d,\textnormal{ext}}}{\partial\bar{\alpha}_{k}},\nonumber\\
\bar{\alpha}_{k+1} &= \Ad_{\tau(hu_{k})^{-1}}\bar{\alpha}_{k}, \hspace{5pt} \bar{\alpha}_{0} = \Ad_{g_{0}^{-1}}\alpha_{0}\nonumber,
\end{align}
where
\begin{align}
\ad_{\bar{\alpha}_{k}}^{*}\frac{\partial V_{d,\textnormal{ext}}}{\partial\bar{\alpha}_{k}} 
= \frac{h\kappa\bar{\alpha}_{k}^{1}}{(\|\bar{\alpha}_k\|^{2}-1)^{2}}\begin{bmatrix}
                                                                    \phantom{-}0 & \phantom{-}0  & \phantom{-}0\\
                                                                    \phantom{-}0  & \phantom{-}0 & \phantom{-}0\\
                                                                    -\bar{\alpha}_{k}^{3} & \phantom{-}\bar{\alpha}_{k}^{2} & \phantom{-}0
                                                                    \end{bmatrix}\nonumber
\end{align}
and $\mu_{k} = u_{k}\in\mathfrak{g}^{*}$. For numerical purposes, one first chooses an appropriate retraction map, such as the Cayley map or the exponential map, etc. (see, e.g., \cite{BouRabee2007}, \cite{BouRabee2009}, \cite{Kobilarov2009}, \cite{Kobilarov2007}, \cite{Kobilarov2011}) and then computes the quantities $(d\tau_{hu_{k}}^{-1})^{*}\mu_{k}$ and $(d\tau_{-hu_{k-1}}^{-1})^{*}\mu_{k-1}$. As an example, if we choose the Cayley map $\text{cay}: \mathfrak{se}(2)\to\mathrm{SE}(2)$ as the retraction map, then we have
\begin{align}
[d\text{cay}_{hu_{k}}^{-1}]^{*}\mu_{k} = \begin{bmatrix}
                                         -\dfrac{h^{2}(u_{k}^{1})^{2}u_{k}^{2}}{4}+\dfrac{h(u_{k}^{2})^{2}}{2} & \phantom{-}u_{k}^{1}+
																				 \dfrac{h^{2}(u_{k}^{1})^3}{4} & \phantom{-}0\\
                                         -u_{k}^{1}-\dfrac{hu_{k}^{1}u_{k}^{2}}{2} & \phantom{-}0 & \phantom{-}0\\
                                         -\dfrac{h(u_{k}^{1})^{2}}{2}+u_{k}^{2} & \phantom{-}0 & \phantom{-}0
                                         \end{bmatrix}\nonumber
\end{align}																	
and
\begin{align}
[d\text{cay}_{-hu_{k-1}}^{-1}]^{*}\mu_{k-1} = \begin{bmatrix}
                                              -\dfrac{h^{2}(u_{k-1}^{1})^{2}u_{k-1}^{2}}{4}-\dfrac{h(u_{k-1}^{2})^{2}}{2} & \phantom{-}
																							u_{k-1}^{1}+\dfrac{h^{2}(u_{k-1}^{1})^3}{4} & \phantom{-}0\\
                                              -u_{k-1}^{1}+\dfrac{hu_{k-1}^{1}u_{k-1}^{2}}{2} & \phantom{-}0 & \phantom{-}0\\
                                              \phantom{-}\dfrac{h(u_{k-1}^{1})^{2}}{2}+u_{k-1}^{2} & \phantom{-}0 & \phantom{-}0
                                              \end{bmatrix}.\nonumber
\end{align}
Note that for $v = \sum_{i=1}^{3}v^{i}e_{i}\in\mathfrak{g}$, the matrix representation for $d\text{cay}_{v}^{-1}$ is given by
\begin{align}
[d\text{cay}_{v}^{-1}] = \begin{bmatrix}
                         1+\dfrac{(v^1)^2}{4} & \phantom{-}0  & \phantom{-}0\\
                         \dfrac{v^1v^2}{4}-\dfrac{v^3}{2} & \phantom{-}1 & \phantom{-}\dfrac{v^1}{2}\\
                         \dfrac{v^1v^3}{4}+\dfrac{v^2}{2} & -\dfrac{v^1}{2} & \phantom{-}1
                         \end{bmatrix}.\nonumber
\end{align}
For more details, see \cite{BouRabee2007}, \cite{BouRabee2009}, \cite{Kobilarov2009}, \cite{Kobilarov2007}, \cite{Kobilarov2011}.
\end{remark}

\section{Conclusions}\label{sec5}
We studied symmetry reduction of OCPs for left-invariant control systems on Lie groups, with partial symmetry breaking cost functions and we obtained the Euler--Poincar{\'e} equations. Furthermore, by using a Legendre transformation, we obtained the Lie--Poisson equations and in the discrete-time setting, we obtained the discrete-time Lie--Poisson equations. The theory was also illustrated with some practical examples.

\section*{Acknowledgments}
The research of Anthony M. Bloch was supported by NSF grants DMS-$1207693$, DMS-$1613819$, INSPIRE-$1343720$ and the Simons Foundation. The research of Leonardo J. Colombo was supported by MINECO (Spain) grant MTM $2013$-$42870$-P and NSF grant INSPIRE-$1343720$.

\end{document}